\theoremstyle{plain}
\newtheorem{thm}{Theorem}[section]
\newtheorem*{thm*}{Theorem}
\newtheorem{lem}[thm]{Lemma}
\newtheorem{prop}[thm]{Proposition}
\theoremstyle{definition}
\newtheorem{rem}[thm]{Remark}
\newtheorem{ex}[thm]{Example}
\newcommand{\landw}{\Lambda}
\newcommand{\frakm}{\mathfrak{m}}
\newcommand{\En}{\widehat{E}(n)}
\DeclareMathOperator*{\gplus}{+}
\DeclareMathOperator*{\gdot}{\cdot}
\newcommand{\xrightarrowdbl}[2][]
{\xrightarrow[#1]{#2}\mathrel{\mkern-14mu}\rightarrow}
\title{Integral Kirwan surjectivity}
\author{Daniel Pomerleano and Constantin Teleman}
\begin{document}
\begin{abstract} We refine Kirwan's \emph{surjectivity} and \emph{formality} theorems for a Hamiltonian 
$G$-action on a compact symplectic manifold $M$. For a regular value of the 
moment map, we show that the Kirwan map is surjective and additively split after inverting the orders of 
stabilizers in the reduction.  In particular, for a free quotient, it is surjective integrally. We  generalize 
this to a splitting of $MU$-module spectra. We also give a stable version of Kirwan's equivariant formality 
theorem: after inverting the torsion primes for $G$, the $G$-action on $MU\wedge M_+$ is trivalizable. 
The novel idea is to exploit the Atiyah-Bott argument in Morava $K$-theory, then return to bordism and 
cohomology.
   \end{abstract}
\maketitle

\section{Introduction} 

Throughout the paper, $G$ will be a compact connected Lie group and $(M,\omega)$ a compact symplectic manifold 
with Hamiltonian $G$-action and moment map $\mu: M \to \mathfrak{g}^*$. 
\subsection{Surjectivity and splitting.}
Recall the now-classical ``Kirwan surjectivity'' theorem: 

\begin{thm*} [\mbox{\cite[\S5]{Kirwan}}]\label{thm:Kirwan} 
The restriction map 
$H^*_G(M; \mathbb{Q}) \to  H_G^*(\mu^{-1}(0); \mathbb{Q})$ is surjective.   
\end{thm*}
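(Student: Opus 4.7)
The plan is to reproduce the original argument via equivariant Morse theory of the norm-square of the moment map. Fix a $G$-invariant inner product on $\mathfrak{g}^*$ and consider $f := \|\mu\|^2 : M \to \mathbb{R}_{\geq 0}$, whose minimum set is exactly $\mu^{-1}(0)$. Although $f$ is not Morse-Bott in the classical sense, Kirwan's analysis shows that its critical set admits a canonical $G$-equivariant decomposition $\mathrm{Crit}(f) = \bigsqcup_{\beta \in \mathcal{B}} C_\beta$ indexed by a finite set $\mathcal{B}$ of coadjoint orbit-types, with $C_0 = \mu^{-1}(0)$, and that $f$ is ``minimally degenerate'' in a sense sufficient to run Morse theory along the negative normal directions.

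Order the critical values $0 = c_0 < c_1 < \cdots < c_N$ and let $M_k := f^{-1}[0, c_k + \epsilon)$, giving a $G$-invariant filtration $M_0 \subset M_1 \subset \cdots \subset M_N = M$ in which the negative gradient flow of $f$ retracts $M_0$ onto $\mu^{-1}(0)$, and each successive pair $(M_k, M_{k-1})$ retracts onto the pair $(D(\nu_\beta^-), S(\nu_\beta^-))$ for the negative normal bundle of the corresponding $C_\beta$. Setting $d_\beta := \operatorname{rk} \nu_\beta^-$, the Thom isomorphism converts the long exact sequence of the pair into
\[
\cdots \to H^{*-d_\beta}_G(C_\beta;\mathbb{Q}) \xrightarrow{\cup e(\nu_\beta^-)} H^*_G(M_k;\mathbb{Q}) \to H^*_G(M_{k-1};\mathbb{Q}) \to \cdots.
\]
If the equivariant Euler class $e(\nu_\beta^-) \in H^*_G(C_\beta;\mathbb{Q})$ is a non-zero-divisor, then multiplication by it is injective, the sequence breaks into short exact pieces, and the restriction $H^*_G(M_k;\mathbb{Q}) \twoheadrightarrow H^*_G(M_{k-1};\mathbb{Q})$ is surjective.

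Establishing this Euler-class non-divisibility is the technical heart of the proof and the step I expect to be the main obstacle. It is the Atiyah-Bott lemma: by construction, the stabilizer of $\beta \in \mathfrak{g}^*$ contains a subtorus $T_\beta$ acting on $\nu_\beta^-$ with all weights lying strictly on one side of a hyperplane in $\mathrm{Lie}(T_\beta)^*$ (namely, the hyperplane determined by $\beta$). Splitting $\nu_\beta^-$ into isotypic pieces for $T_\beta$ and using the splitting principle, one sees that the restriction of $e(\nu_\beta^-)$ to $H^*_{T_\beta}(C_\beta;\mathbb{Q}) \cong H^*(BT_\beta;\mathbb{Q}) \otimes H^*(C_\beta;\mathbb{Q})$ (rationally, after passing to the maximal torus) is a product of nonzero linear forms in $H^*(BT_\beta;\mathbb{Q})$ up to lower-order corrections, hence is a non-zero-divisor in this polynomial ring; standard transfer/injectivity arguments then transport this back to $H^*_G(C_\beta;\mathbb{Q})$.

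Composing the resulting chain of surjections $H^*_G(M;\mathbb{Q}) \twoheadrightarrow H^*_G(M_{N-1};\mathbb{Q}) \twoheadrightarrow \cdots \twoheadrightarrow H^*_G(M_0;\mathbb{Q}) \cong H^*_G(\mu^{-1}(0);\mathbb{Q})$, with the last identification coming from the gradient retraction, yields the desired surjectivity of the Kirwan map.
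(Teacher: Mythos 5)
Your outline is a faithful reconstruction of Kirwan's original Morse-theoretic argument, which is exactly the background result the paper cites and then takes as the template for its own generalizations to Morava $K$-theory and complex-oriented theories (Theorem~\ref{thm:MoravaKirwan}): the filtration by sublevel sets of $\|\mu\|^2$, the Thom--Gysin reduction, and the Atiyah--Bott non-zero-divisor criterion for the equivariant Euler class are all the same ingredients. One small imprecision: the identification $H^*_{T_\beta}(C_\beta;\mathbb{Q}) \cong H^*(BT_\beta;\mathbb{Q}) \otimes H^*(C_\beta;\mathbb{Q})$ needs $T_\beta$ to act trivially on $C_\beta$, which is only guaranteed for the circle generated by $\beta$ acting on the reduced piece $C_\beta'$ (after writing $C_\beta = G\times_L C_\beta'$ with $L$ the Levi centralizer of $\beta$, as the paper does in \S\ref{sec:perfect}), not for an arbitrary subtorus of the stabilizer; with that correction the argument is sound.
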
 
\noindent
The question of integral improvements has surfaced periodically in the literature, e.g.~\cite{tw, aim}.
In this respect, we prove the following sharper result:

\begin{thm} \label{thm:main} 
Suppose that $G$ acts on $\mu^{-1}(0)$ with finite stabilizer groups.  Let $\ell$ be the least common multiple of 
their orders.  Then the map 
\begin{align} 
H^*_G(M; \mathbb{Z}[\ell^{-1}]) \to H_G^*(\mu^{-1}(0); \mathbb{Z}[\ell^{-1}]) \cong 
H^*(\mu^{-1}(0)/G; \mathbb{Z}[\ell^{-1}]) 
\end{align} 
is surjective, and additively split.   
\end{thm}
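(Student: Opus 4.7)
The plan is to revisit the classical Atiyah--Bott Morse-theoretic proof of Kirwan surjectivity, but to run it in a generalized cohomology theory sensitive enough to retain the integral structure after inverting $\ell$. The first step is the standard $G$-invariant Morse--Bott stratification of $M$ by the downward gradient flow of $f = \tfrac12 |\mu|^2$: one writes $M$ as a disjoint union of stable manifolds $S_\beta$ of $G$-invariant critical sets $C_\beta$, indexed by a finite subset of $\mathfrak{g}$, with $C_0 = \mu^{-1}(0)$ as the absolute minimum. For any multiplicative cohomology theory $E^*$, Kirwan surjectivity and an additive splitting of $E_G^*(M) \to E_G^*(\mu^{-1}(0))$ would follow once one establishes that, for each nonzero $\beta$, the equivariant Euler class of the negative normal bundle $\nu_\beta^-$ of $C_\beta$ is a non-zero divisor in $E_G^*(C_\beta)$: the Thom--Gysin sequences then break into short exact sequences, and the Morse filtration of $E_G^*(M)$ has $E_G^*(\mu^{-1}(0))$ as its bottom piece, split off additively.

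Over $\mathbb{Q}$ this regularity is the classical Atiyah--Bott weight computation, because $H^*(BG;\mathbb{Q})$ is polynomial and Euler classes factor through a maximal torus. Integrally it fails at every prime dividing the torsion of $BG$, so the second step, following the abstract's indication, is to carry out the computation instead in $p$-local Morava $K$-theory $K(n)$, for each prime $p\nmid \ell$ and each $n \geq 1$. By the work of Hopkins--Kuhn--Ravenel and Ravenel--Wilson, $K(n)^*(BG)$ is a finite free $K(n)^*$-module with good restriction behavior to finite subgroups; provided every stabilizer on $\mu^{-1}(0)$ has order prime to $p$, a formal-group-law analogue of the Atiyah--Bott weight argument should show that $e_{K(n)}(\nu_\beta^-)$ is a non-zero divisor on each $C_\beta$, upgrading the Morse stratification to a Kirwan splitting in $K(n)^*_G$.

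The third step is to bootstrap these Morava $K$-theory splittings back to $MU$-modules and then to integral cohomology. A compatible family of $K(n)^*_G$-splittings at $p$, ranging over all heights $n$, should assemble to a splitting of $MU_{(p)}$-module spectra via nilpotence / Landweber-type arguments, and applying the Thom map $MU \to H\mathbb{Z}$ then yields the desired additive splitting in integral cohomology over $\mathbb{Z}_{(p)}$. Reassembling over all primes $p \nmid \ell$ gives the statement over $\mathbb{Z}[\ell^{-1}]$.

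The main obstacle will be the second step: running the Atiyah--Bott regularity argument in equivariant Morava $K$-theory with exactly the optimal set of primes inverted, namely only those dividing the stabilizer orders on $\mu^{-1}(0)$. The Euler classes are now twisted by the $p$-typical formal group law instead of being additive, so one must carefully track how stabilizer orders enter the non-zero-divisor condition at each critical stratum, and coordinate the Gysin sequences as one sweeps upward through the critical values of $|\mu|^2$. A secondary technical point is to promote a surjective statement to a genuinely split surjection compatible with the passage $K(n) \rightsquigarrow MU \rightsquigarrow H\mathbb{Z}$.
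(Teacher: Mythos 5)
Your overall strategy matches the paper's: Morse stratification for $\|\mu\|^2$, Atiyah--Bott regularity of equivariant Euler classes run in Morava $K$-theory, then descent to ordinary cohomology. But there are two substantive gaps in how you close the argument, and one misplacement of where the stabilizer hypothesis actually enters.

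First, working only with $K(n)=K_p(n)$ is not enough to get the \emph{splitting}. Morava $K$-theory at the prime $p$ sees only mod-$p$ information; surjectivity of $H^*(-;\mathbb{Z}/p)$ for all $p\nmid\ell$ gives surjectivity over $\mathbb{Z}[\ell^{-1}]$, but it does not prevent unsplit extensions such as $\mathbb{Z}/p^2 \twoheadrightarrow \mathbb{Z}/p$. The paper uses the $r$-fold extensions $K_{p^r}(n)$ of Morava $K$-theory (so with coefficients $\mathbb{Z}/p^r[v_n^{\pm 1}]$) to obtain surjectivity of $H^*(-;\mathbb{Z}/p^r)$ for \emph{every} $r$; via the universal coefficients theorem this forces surjectivity on each $p^r$-torsion subgroup, and a map of finitely generated abelian groups which is surjective on all torsion subgroups is automatically additively split. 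Without the $r>1$ theories, you only get surjectivity, not the splitting claimed in the theorem, and the back end of your argument cannot be repaired just by invoking nilpotence at height varying $n$.

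Second, your third step is both vaguer and more elaborate than what is needed for the cohomology statement. You do not need to assemble $K(n)$-splittings into an $MU_{(p)}$-module splitting and then push through the Thom map $MU\to H\mathbb{Z}$ (which has a large kernel and does not obviously transport a splitting). The paper's return to ordinary cohomology is elementary and local: the map in question factors through a finite skeleton $Y_{\le m}$, and once the height $n$ is large enough that $|v_n|>m+1$, the Atiyah--Hirzebruch spectral sequence for $K_{p^r}(n)^*$ collapses for grading reasons on spaces of dimension $\le m$; thus $K_{p^r}(n)^*$ of such spaces literally \emph{is} periodicized $H^*(-;\mathbb{Z}/p^r)$, and surjectivity transfers with no further machinery. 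The genuine $MU$-module-spectrum splitting is a separate, stronger theorem in the paper, proved by a different induction on extension classes, and the cohomology statement is not deduced from it.

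A smaller point: you suggest the stabilizer orders "enter the non-zero-divisor condition at each critical stratum." In the paper's setup the Atiyah--Bott lemma for $K_{p^r}(n)$ is unconditional (no stabilizer hypothesis at all, and indeed the Morava perfection theorem holds without one). The hypothesis $p\nmid\ell$ is only used at the end, to compare $H^*_G(\mu^{-1}(0))$ with $H^*(\mu^{-1}(0)/G)$: when $G$ acts with nontrivial finite stabilizers, $\mu^{-1}(0)_G$ and $\mu^{-1}(0)/G$ differ, and the paper bridges this via Bousfield $\mathbb{Z}_{(p)}$-localization, which makes them equivalent away from $\ell$. Your proposal does not address this orbifold point at all, and without it the argument only handles the free case.
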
 
\noindent
In particular,  if $G$ acts freely on $\mu^{-1}(0)$,  Kirwan's map is split-surjective on integral cohomology. 
The original arguments in \cite{Kirwan} give a slightly weaker result, with $\ell$ including the orders of 
(components of) additional stabilizers in $M$, not present in $\mu^{-1}(0)$. 

Kirwan deduces Theorem~\ref{thm:Kirwan} from the \emph{equivariant perfection} of the the Morse stratification 
\[
M=\bigcup\nolimits_\beta S_\beta
\] 
associated to the norm-squared function $||\mu||^2$ (with respect to any invariant inner product on~$\mathfrak{g}^*$). 
This stronger statement follows from the \emph{Atiyah-Bott lemma} (\cite[Proposition 13.4]{AB} or 
\cite[Lemma 2.18]{Kirwan}): 
the stratification is equivariantly perfect if the equivariant Euler classes 
$e_G(N_\beta) \in H^*_G(S_\beta; \mathbb{Q})$ of the normal bundles $N_\beta$ to the strata are not zero-divisors. However, these Euler classes can be zero-divisors 
on the torsion part of cohomology, and the argument fails there.  

Instead, we execute Kirwan's argument in certain generalized cohomology theories, the \emph{Morava $K$-theories} 
$K_p(n)$. These are complex-oriented theories which depend on a prime~$p$ and a height~$n$.  We must also 
consider their $r$-fold extensions $K_{p^r}(n)$.  Our equivariant Euler classes are never zero-divisors in 
$K_{p^r}(n)$:   this reduces to the same fact for the Morava Euler classes of non-trivial line 
bundles in $K_{p^r}(n)^*\left(\mathbb{CP}^\infty\right)$.  
Equivariant perfection of the Morse stratification for $K_{p^r}(n)$ follows. For (genuine) 
equivariant complex $K$-theory, this good behavior of Euler classes had already been observed, and used 
to the same effect, in~\cite{Harada-Landweber}. 
Return to ordinary cohomology, in Theorem~\ref{thm:main}, is then effected by noting that the 
Morava $K$-theory of a finite-dimensional $CW$-complex agrees with its periodicized cohomology with 
$\mathbb{Z}/p^r$ coefficients, if the height $n$ is sufficiently large. 

Additional input from homotopy theory entails the perfection of the Morse stratification for 
complex cobordism $MU$ and related complex-oriented theories. Denote by $\landw$ one of 
$K_{p^r}(n)$, or a \emph{Landweber-exact} cohomology theory (\S\ref{sec:mumodules}), and by 
$\landw_G^*(X)$ the same theory applied to the the \emph{Borel construction} $X_G:=X\times_G EG$ 
of a space $X$.\footnote{Our statements thus pertain to the completed, not the genuine equivariant 
theories.} 
\begin{thm}  \label{thm:MoravaKirwan}  
The Morse stratification $M=\bigcup_\beta S_\beta$  is equivariantly perfect for $\landw$.  
In particular,  the  restriction map $\landw^*_G(M)\to  \landw^*_G(\mu^{-1}(0))$ is surjective. 
\end{thm}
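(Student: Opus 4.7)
The plan is to run Kirwan's Morse-theoretic proof via the Atiyah-Bott lemma, but in the generalized cohomology $\landw$. The stratification $M = \bigcup_\beta S_\beta$ and its underlying geometry --- the $G$-equivariant retraction $S_\beta \to C_\beta$ onto the critical set and the complex $G$-equivariant unstable normal bundle $N_\beta \to C_\beta$ --- is purely topological and does not depend on the cohomology theory. Complex orientability of $\landw$ supplies Thom isomorphisms and Thom-Gysin long exact sequences for each successive pair of closed unions of strata. The Atiyah-Bott lemma then reduces \emph{equivariant perfection for $\landw$} --- that these long exact sequences split into short exact ones --- to the statement that every equivariant Euler class $e_G(N_\beta) \in \landw_G^*(C_\beta)$ is a non-zero-divisor. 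Surjectivity of the restriction $\landw_G^*(M) \to \landw_G^*(\mu^{-1}(0))$ then follows by induction on the stratification ordered by $||\mu||^2$-value.

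The key step is the non-zero-divisor claim for $e_G(N_\beta)$. Kirwan's geometry exhibits a one-parameter subtorus $T_\beta \subset G$ (generated by $\beta$) which acts on $N_\beta$ through characters whose weights are all non-zero and of one sign. The equivariant splitting principle in complex-oriented cohomology then reduces the claim to: for any non-trivial character of weight $m$, the Euler class of the associated line bundle over $BT_\beta$, equal to the formal-group $m$-series $[m]_F(x) \in \landw^*(BT_\beta) \cong \landw_*[[x]]$, is a non-zero-divisor. For $\landw = K_{p^r}(n)$, writing $m = p^a m'$ with $\gcd(m', p) = 1$ and using the height-$n$ identity $[p]_F(x) = v_n x^{p^n}$, one computes $[m]_F(x) = u \cdot x^{p^{an}}$ for some unit $u$, which is a non-zero-divisor in the power-series ring $\landw_*[[x]]$. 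For a Landweber-exact $\landw$, the formula $\landw_*(-) \cong MU_*(-) \otimes_{MU_*} \landw_*$ and $MU_*$-flatness of $\landw_*$ reduce the claim to $\landw = MU$, where $[m]_F(x) = x(m + O(x))$ has non-zero-divisor constant term $m$ of its second factor in the domain $MU_* = \mathbb{Z}[x_1, x_2, \ldots]$, so $[m]_F(x)$ itself is a non-zero-divisor.

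The expected main subtlety is the rigorous reduction from $G$-equivariant to $T_\beta$-equivariant Euler classes, together with the preservation of zero-divisor status under the restriction $\landw_G^*(C_\beta) \to \landw_{T_\beta}^*(C_\beta)$. Over $\mathbb{Q}$ this is immediate from Chevalley's restriction theorem; for general complex-oriented $\landw$ it requires the Borel-equivariant splitting principle and care in tracking zero-divisor status across restriction maps. This is precisely where the Morava $K$-theory route circumvents the integral-cohomology obstacles, since there the splitting principle combined with the explicit formal-group formula gives the non-zero-divisor property on the nose. With the Euler-class computation in hand, the Morse-theoretic induction of Kirwan closes the argument exactly as in the rational case.
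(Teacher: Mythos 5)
Your high-level outline matches the paper: Kirwan's Morse-theoretic induction, reduced via the Gysin sequence and the Atiyah--Bott criterion to the non-zero-divisor property of equivariant Euler classes, which one attacks through a distinguished one-parameter subgroup and formal-group $m$-series. That part is right. But the key step --- showing $e_G(N_\beta)$ is a non-zero-divisor for $\landw$ --- is done too quickly, and there are three genuine gaps in the way you carry it out.

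First, the splitting principle does not reduce the problem to constant formal-group $m$-series. Restricting to $C_\beta = G\times_L C_\beta'$ gives $\landw^*_G(C_\beta) \cong \landw^*_L(C_\beta')$, and after further reducing $L$ to $S^1\times H$ one has $\landw^*_{S^1\times H}(C_\beta') \cong \landw^*_H(C_\beta')[[u]]$. The equivariant Euler class is \emph{not} a product of $m$-series $[m_i]_F(u)$: it has contributions from the non-trivial topology and $H$-equivariance of $N_\beta$ over $C_\beta'$. What one can show (formula~\eqref{eq:restrictioneuler} in the paper) is that $e_{S^1\times H}(N_\beta) \equiv u^k\cdot(\text{unit})$ modulo the maximal ideal $\mathfrak{m}$ of the \emph{graded local ring} $\landw^*_H(C_\beta')$. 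Concluding that this is a non-zero-divisor requires that $\landw^*_H(C_\beta')$ is local and $\mathfrak{m}$-adically Hausdorff, which in turn rests on the strong convergence of the Atiyah--Hirzebruch sequence (\S\ref{sec:facts}.3) and the graded Noetherian local structure of $E^*(pt)$. "The formal-group formula gives the non-zero-divisor property on the nose" is precisely what fails; the leading-term-plus-filtration argument is the real content of Lemma~\ref{lem:AtiyahBott1}.

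Second, your formula $[m]_F(x) = u\cdot x^{p^{an}}$ with $u$ a unit is correct for $K_p(n)$ but false for $K_{p^r}(n)$ with $r>1$: there the coefficient ring $(\mathbb{Z}/p^r)[v_n^{\pm1}]$ has $p\neq 0$, and the $m$-series $[m]_F(x) = m x +_F \cdots +_F (\text{unit})\cdot x^{p^{an}} +_F\cdots$ is only congruent to a unit times $x^{p^{an}}$ modulo the maximal ideal $\mathfrak{m}_0 = (p)$. So even over a point, the non-zero-divisor property is not "on the nose" for the extensions; it again requires the filtration argument. Since $K_{p^r}(n)$ for all $r$ is essential downstream (Theorem~\ref{thm:main} needs to detect $\mathbb{Z}/p^r$-cohomology), this is not a corner case.

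Third, your reduction for Landweber-exact theories is broken: Landweber exactness does \emph{not} say that $\landw_*$ is flat over $MU_*$. (For instance $BP_*$ and $E(n)_*$ are manifestly not flat over $MU_*$.) Landweber's criterion gives exactness of $MU_*(-)\otimes_{MU_*}\landw_*$ only on the subcategory of $MU_*MU$-comodules of the form $MU_*(X)$, which is much weaker, and in any case concerns homology of finite complexes. To pass to cohomology of the infinite complex $N_L$ and to preserve the non-zero-divisor property under the coefficient change, one needs a quantitative, skeletal-stable version of the Atiyah--Bott statement (Proposition~\ref{prop:kerstability}), proved with explicit Landweber-filtration estimates in Appendix~A. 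This is the genuinely technical part of the paper and your proposal does not address it.
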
 
\noindent
We do \emph{not} require here that $G$ should act with finite stabilizers on $\mu^{-1}(0)$. This  
marks a fine distinction between \emph{perfection} of the stratification and the \emph{splitting} 
in Theorem~\ref{thm:main}. Spliting is the shadow of a stronger stable homotopy result; the master 
statement applies to complex bordism. We must invert the 
orders of stabilizers; $MU[\ell^{-1}]$ will denote the localization of $MU$ away form $\ell$. 

\begin{thm} \label{thm:specKirwan}
(Assumptions as in Theorem~\ref{thm:main}.) The map of spectra induced by inclusion is $MU$-linearly split: 
\[
MU [\ell^{-1}]\wedge \mu^{-1}(0)_G \to MU[\ell^{-1}]\wedge M_G.
\] 
\end{thm}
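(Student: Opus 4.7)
The plan is to construct the $MU[\ell^{-1}]$-linear retraction inductively along the Morse stratification $M=\bigsqcup_\beta S_\beta$ coming from $\|\mu\|^2$, upgrading the cohomological Atiyah--Bott argument of Theorem~\ref{thm:MoravaKirwan} to the level of module spectra. Writing $M_{\leq\beta}=\bigcup_{\beta'\leq\beta} S_{\beta'}$ for the closed filtration pieces ordered by $\|\beta\|^2$, the open stratum $S_0$ retracts $G$-equivariantly onto $\mu^{-1}(0)$ along the downward gradient flow. After taking Borel constructions, smashing with $MU[\ell^{-1}]$, and identifying successive quotients via the Thom collapse for the negative normal bundle $N_\beta$, each step of the filtration produces a cofiber sequence of $MU[\ell^{-1}]$-module spectra
\[
MU[\ell^{-1}] \wedge M_{<\beta,G} \;\longrightarrow\; MU[\ell^{-1}] \wedge M_{\leq\beta,G} \;\longrightarrow\; MU[\ell^{-1}] \wedge (S_\beta)_G^{(N_\beta)_G}.
\]
If each of these cofiber sequences admits an $MU[\ell^{-1}]$-linear splitting, then composing sections across the finitely many strata produces an $MU[\ell^{-1}]$-linear retract of $MU[\ell^{-1}]\wedge M_{G}$ onto $MU[\ell^{-1}]\wedge \mu^{-1}(0)_{G}$.

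Splitting the $\beta$-th cofiber sequence reduces to proving nullity of its connecting map. Since $N_\beta$ is complex and hence canonically $MU$-oriented, the Thom isomorphism identifies the rightmost term with a suspension of $MU[\ell^{-1}] \wedge (S_\beta)_G$, and the connecting map is controlled by multiplication by the equivariant $MU$-Euler class $e^{MU}((N_\beta)_G)$. Invoking Theorem~\ref{thm:MoravaKirwan} for every Morava $K$-theory $K_{p^r}(n)$ with $p\nmid\ell$, and for a Landweber-exact model computing the rational part of $MU$, already yields the cohomological perfection: these Euler classes are non-zero-divisors in the corresponding equivariant cohomology rings, so the connecting maps vanish on $K_{p^r}(n)^*$ and rationally.

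To promote these cohomological vanishings to a genuine nullity at the level of $MU[\ell^{-1}]$-module spectra, the plan is to invoke a chromatic detection principle available after inverting $\ell$: an $MU[\ell^{-1}]$-module map between spectra built from compact $G$-CW complexes whose Morava $K$-theory cohomology maps vanish at every height $n$ and every prime $p\nmid\ell$, and vanish rationally, must itself be null. Landweber exactness reassembles the $K_{p^r}(n)$-data into $MU[\ell^{-1}]$-data, and the compactness of the underlying manifold controls the Borel construction. Iterating the resulting splittings across the Morse filtration then produces the retraction called for by the theorem.

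The main obstacle is precisely this last step: lifting the cohomological perfection of Theorem~\ref{thm:MoravaKirwan} to a splitting at the level of $MU[\ell^{-1}]$-module spectra. Unlike in the cohomological Atiyah--Bott argument, where non-zero-divisor Euler classes yield splittings directly, the module-level obstruction lives in a derived $MU[\ell^{-1}]$-module $\mathrm{Hom}$ group. The key technical task is to show that, after inverting $\ell$, the isotropy contributions to the relevant Euler classes become invertible enough that the Morava $K$-theoretic input collected uniformly across primes $p\nmid\ell$ suffices to trivialize this obstruction.
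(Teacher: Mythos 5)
Your proposal runs into a genuine gap, which you have in fact identified yourself in the final paragraph. The proposed ``chromatic detection principle'' --- that an $MU[\ell^{-1}]$-module map between spectra built from Borel constructions whose induced maps on $K_{p^r}(n)^*$ vanish for all heights and all $p\nmid\ell$, and rationally, must itself be null --- is not a theorem in the required generality. Detection results of that shape are available for \emph{finite} or suitably local spectra, but the Borel constructions $M_{\le\beta,G}$ are infinite $CW$ complexes, and there is no chromatic fracture statement that would let you conclude nullity of a map merely from cohomological vanishing of the induced maps across Morava theories. Relatedly, even in a single theory, cohomological surjectivity in the Gysin sequence (which is what Theorem~\ref{thm:MoravaKirwan}/the Atiyah--Bott non-zero-divisor argument delivers) does not by itself tell you that the classifying map of the spectral fibration is null: a nonzero map of spectra can very well induce zero on cohomology. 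So the crucial passage from ``cohomological perfection'' to ``spectral splitting'' is left genuinely open.

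The paper's actual argument sidesteps this by making the extension class itself a cohomology class, rather than a map to be detected. The filtration is organized differently: one attaches strata to the \emph{fixed} open stable stratum $M_0$, giving fibrations $\landw\wedge M_0^+ \rightarrowtail \landw\wedge M_{\le\beta}^+ \twoheadrightarrow \landw\wedge(M_{\le\beta},M_0)$ over $BG$. Crucially, $(\landw\wedge M_0^+)_{hG}\simeq\landw\wedge Q_+$ with $Q=\mu^{-1}(0)/G$ a compact (orbifold) almost complex manifold once $\ell$ is inverted. Poincar\'e duality on $Q$ then identifies the classifying map of the extension with a class $\varepsilon_\beta\in\landw^{1-\dim Q}_G\bigl(Q\times(M_{\le\beta},M_0)\bigr)$, a genuine cohomology group. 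The long exact sequence of the triple $(M_{\le\beta},M_{<\beta},M_0)$, an inductive vanishing assumption $\varepsilon_{<\beta}=0$, and the identity $e_G(N_\beta)\cdot\eta=0$ obtained by restricting the lift $\eta$ of $\varepsilon_\beta$ back to $S_\beta$ then let the Atiyah--Bott lemma (Lemma~\ref{lem:AtiyahBott1} for $\landw=MU[\ell^{-1}]$) kill $\varepsilon_\beta$ directly \emph{inside} $\landw$-cohomology, with no need for any cross-theory detection. Your setup, which takes $M_{<\beta}$ as the fiber at each stage, would moreover lose this Poincar\'e duality handle, since $M_{<\beta}$ has no analogue of the compact quotient $Q$.
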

\subsection{Equivariant formality.}
Kirwan also proved \cite[Proposition~5.8]{Kirwan} a rational \emph{equivariant formality}  
for the $G$-action on $M$:
\begin{equation}\label{eq:factorKirwan}
H^*\left(M_G;\mathbb{Q}\right) \cong H^*(M)\otimes H^*(BG;\mathbb{Q}). 
\end{equation}
Using Morava-valued  pseudoholomorphic curve count, this was generalized in \cite{BaiPomerleano} 
to complex-oriented cohomology theories $E$ for which $E^*(BG)$ is free over $E^*$; the method was inspired 
by the similar application in \cite{AMS} to Hamiltonian bundles over $S^2$. Using only  topological 
methods, we prove a spectrum refinement. We must invert the product $\ell_G$ of torsion 
primes for~$BG$. We  indicate the addition of a disjoint base-point to a space by a ``+'', 
placed as typographically convenient. 

\begin{thm}\label{thm:specformal}
The action of~$G$ on $MU[\ell_G^{-1}]\wedge M_+$ is trivializable: there is an $MU$-linear homotopy 
equivalence of bundles of spectra over $BG$
\[
\left(MU[\ell_G^{-1}]\wedge M_+\right)\times_G EG \cong \left(MU [\ell_G^{-1}]\wedge M_+\right) \times BG.
\]
\end{thm}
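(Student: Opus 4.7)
The plan is to promote the cohomological Kirwan formality that is implicit in Theorem~\ref{thm:MoravaKirwan} to a spectrum-level trivialization of the parametrized $MU$-module bundle over $BG$. The first step is the module-level Künneth picture. Since $MU[\ell_G^{-1}]$ is Landweber-exact, Theorem~\ref{thm:MoravaKirwan} yields surjectivity of the restriction $MU^*_G(M)[\ell_G^{-1}] \to MU^*(M)[\ell_G^{-1}]$. Combined with the fact that $MU^*(BG)[\ell_G^{-1}]$ is a polynomial ring over $MU^*[\ell_G^{-1}]$ concentrated in even degrees---a consequence of inverting all torsion primes of $BG$---a Leray--Hirsch argument gives the identification
\[
MU^*_G(M)[\ell_G^{-1}] \;\cong\; MU^*(BG)[\ell_G^{-1}] \otimes_{MU^*[\ell_G^{-1}]} MU^*(M)[\ell_G^{-1}]
\]
as $MU^*(BG)[\ell_G^{-1}]$-modules.

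To realize this decomposition by an actual equivalence of spectra, I would construct an $MU[\ell_G^{-1}]$-linear retraction
\[
r \colon MU[\ell_G^{-1}] \wedge M_G \longrightarrow MU[\ell_G^{-1}] \wedge M_+
\]
of the fiber inclusion $\iota \colon MU[\ell_G^{-1}] \wedge M_+ \to MU[\ell_G^{-1}] \wedge M_G$ (coming from $M \hookrightarrow M_G$). Given such an $r$, its $MU \wedge BG_+$-linear extension---viewing $MU \wedge M_G$ as a module over $MU \wedge BG_+$ through the Borel projection $M_G \to BG$---assembles into a map of parametrized $MU$-module spectra
\[
MU[\ell_G^{-1}] \wedge M_G \;\longrightarrow\; MU[\ell_G^{-1}] \wedge M_+ \wedge BG_+
\]
whose fiber over the basepoint of $BG$ is $r \circ \iota = \mathrm{id}$. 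Since a fiberwise equivalence of parametrized $MU$-module spectra over the connected base $BG$ is a global equivalence, this furnishes the desired trivialization.

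The main obstacle is the construction of the retraction $r$ at the spectrum level rather than merely at the cohomology level. The obstructions live in shifted pieces of the function spectrum $F_{MU}(MU \wedge M_G,\, MU \wedge M_+)$, and should vanish because $MU^*(BG)[\ell_G^{-1}]$ is free over $MU^*[\ell_G^{-1}]$ and concentrated in even degrees, making the relevant Atiyah--Hirzebruch-type spectral sequence degenerate at $E_2$. A more hands-on alternative would be to induct over the Morse strata of $\|\mu\|^2$: each $(S_\beta)_G$ is the Thom spectrum of the normal bundle $N_\beta$ over $(C_\beta)_G$, and the non-zero-divisor property of $e_G(N_\beta)$ in $MU[\ell_G^{-1}]$-theory---the very ingredient underlying Theorem~\ref{thm:MoravaKirwan}---should permit the retraction to be extended across strata without encountering obstructions.
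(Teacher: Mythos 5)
Your general scheme---produce an $MU[\ell_G^{-1}]$-linear retraction $r$ of the fiber inclusion, then extend $MU\wedge BG_+$-linearly and use the fact that a fiberwise equivalence over a connected base is a global one---is reasonable in outline, and is indeed roughly how the paper concludes its argument. But neither of your two proposals for actually producing $r$ works, and both miss the key structural idea of the paper: trivialize over $BT$ first, then transfer down along $\pi\colon BT\to BG$.

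Your obstruction-theoretic proposal is essentially circular. The degeneration of the Atiyah--Hirzebruch/Leray spectral sequence computing $MU^*(M_G)[\ell_G^{-1}]$ is \emph{not} a formal consequence of $MU^*(BG)[\ell_G^{-1}]$ being evenly graded and free over $MU^*$; $M$ itself has odd-dimensional cells, so there is nothing preventing nonzero differentials a priori. That degeneration is exactly the content of equivariant formality, which the paper proves (Proposition~\ref{prop:prMoravacircle}) by a non-obvious Morava $K$-theory counting argument, and the spectrum-level lifting is the content of the theorem itself. Invoking ``the relevant spectral sequence should degenerate'' as the justification assumes the conclusion.

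Your ``hands-on'' alternative, inducting over the Morse strata of $\|\mu\|^2$, is the approach the paper uses for Theorem~\ref{thm:specKirwan} (the splitting), but it cannot give \emph{trivialization} of the $G$-action. The critical sets of $\|\mu\|^2$ are $C_\beta = G\times_L C'_\beta$ with genuinely nontrivial $G$-action, so $(C_\beta)_G = (C'_\beta)_L$ is itself a twisted Borel construction; splitting the filtration by Atiyah--Bott decomposes $MU\wedge M_G$ into wedge summands that are still nontrivial bundles over $BG$. What makes the paper's argument work is the restriction to $T$ and a different Morse function: the moment map of a generic circle $S^1\subset T$. Its critical loci are the components $F_\beta$ of the $T$-fixed set, on which $T$ acts trivially, so $(F_\beta)_T \simeq F_\beta\times BT$ and each stratum contributes a constant bundle. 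The extension classes in this filtration are killed by the Atiyah--Bott lemma (Theorem~\ref{thm:spectorus}). The descent from $T$ to $G$ then goes through the fiberwise integration $S\colon \pi_*MU[\ell_G^{-1}]\to MU[\ell_G^{-1}]$ over $G/T$, which is well defined and splits the unit precisely because $\ell_G$ has been inverted; this is not in your proposal at all. So the gap is genuine: you need both the reduction to $T$ via an $S^1$-moment map (not $\|\mu\|^2$) and the $G/T$-transfer step.
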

\noindent
Mapping out from $MU[\ell_G^{-1}]$ to an~$E$ as above and comparing homotopy groups recovers the 
result in~\cite{BaiPomerleano},  $E^*(M_G) \cong E^*(M)\otimes_{E^*} E^*(BG)$.

As expected, we can be more precise when $G$ is a torus $T$. Denote by $(F_\beta)_{\beta \in \mathcal{B}} $  
the connected components of the fixed point locus of the $T$-action and by $\lambda_\beta$ the 
Morse index of~$F_\beta$, when viewed as a critical set for the moment map associated 
to a chosen, but generic circle $S^1 \subset T$.  

\begin{thm} \label{thm:spectorus}   
There is a fiber homotopy equivalence of $MU$-linear spectra over $BT$:
\[
\left(MU\wedge M_+\right)\times_T ET \cong 
\left(\bigvee\nolimits_{\beta \in \mathcal{B}} \Sigma^{\lambda_\beta} MU\wedge F_\beta^+\right) \times BT.
\]
\end{thm}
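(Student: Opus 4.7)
The plan is to run the Atiyah-Bott argument behind Theorem~\ref{thm:MoravaKirwan} in the $T$-equivariant $MU$-module category, but applied to the simpler Morse-Bott decomposition attached to a generic element of~$\mathfrak{t}$, whose critical manifolds are precisely the~$F_\beta$. No localization will be needed, because $H^*(BT;\mathbb{Z})$ is torsion-free.

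First I would choose a generic $\xi\in\mathfrak{t}$ so that the closure of $\exp(\mathbb{R}\xi)\subset T$ has the same fixed set as~$T$, and set $f:=\mu_\xi:M\to\mathbb{R}$. Classical results of Atiyah and Frankel give that $f$ is a $T$-invariant Morse-Bott function with critical set $M^T=\bigsqcup_{\beta\in\mathcal{B}}F_\beta$, and that the negative normal bundles $N_\beta^-\to F_\beta$ are $T$-equivariant complex vector bundles of complex rank $\lambda_\beta/2$. Integrating $-\nabla f$ produces a $T$-stable filtration $\{M_{\le\beta}\}$ whose open strata $T$-equivariantly deformation retract onto~$F_\beta$, with normal bundle~$N_\beta^-$.

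Next I would apply the Atiyah-Bott perfection argument $T$-equivariantly in the $MU$-module category. The key input is that the equivariant Euler class $e_{MU}(N_\beta^-)\in MU_T^*(F_\beta)$ is a non-zero-divisor; after pulling back to a flag bundle this reduces, by the product formula, to the non-zero-divisor property of Euler classes of non-trivial $T$-line bundles, which is verified in the proof of Theorem~\ref{thm:MoravaKirwan} by reduction to $MU^*(\mathbb{CP}^\infty\times BT)$. Consequently every connecting cofiber sequence
\[
MU\wedge (M_{<\beta})_T^+ \longrightarrow MU\wedge (M_{\le\beta})_T^+ \longrightarrow MU\wedge\mathrm{Th}_T(N_\beta^-)
\]
splits $MU$-linearly over~$BT$.

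To conclude I would use that $(F_\beta)_T\simeq F_\beta\times BT$, since $F_\beta\subset M^T$, together with the complex-orientation Thom isomorphism, to identify
\[
MU\wedge\mathrm{Th}_T(N_\beta^-)\ \simeq\ \Sigma^{\lambda_\beta}\,MU\wedge F_\beta^+\wedge BT_+,
\]
and then wedge over~$\beta$ to recover the asserted fiberwise equivalence. The hard part is assembling the per-stratum splittings into one coherent decomposition of the whole Atiyah-Bott tower; this is precisely what the non-zero-divisor property delivers, since it forces every extension class in the tower to vanish, exactly as in~\cite{AB}.
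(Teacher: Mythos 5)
Your strategy matches the paper's: pick a generic circle in $T$, use the associated Morse--Bott filtration $\{M_{\le\beta}\}$ with critical loci $F_\beta$, and show the successive cofiber sequences of $MU$-module bundles over $BT$ split by exploiting the Atiyah--Bott non-zero-divisor property of $e_T(N^-_\beta)$. So the decomposition and the key lemma are the right ones.

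The genuine gap is in the sentence ``Consequently every connecting cofiber sequence \ldots splits $MU$-linearly over $BT$,'' and in the closing remark that the non-zero-divisor property ``forces every extension class in the tower to vanish, exactly as in \cite{AB}.'' You are conflating a cohomological consequence with a spectrum-level one. That $e_T(N^-_\beta)$ is not a zero-divisor in $MU_T^*(F_\beta)$ gives exactness of the Gysin long exact sequence (Kirwan perfection; this \emph{is} \cite{AB}), but it does not by itself show that the classifying map of the fibration
\[
MU\wedge M^+_{<\beta} \rightarrowtail MU\wedge M^+_{\le\beta} \twoheadrightarrow \Sigma^{N_\beta}MU\wedge F^+_\beta
\]
over $BT$ is null. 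The classifying map is a priori a map of spectra $MU\wedge F^+_\beta \to \Sigma^{1-\lambda_\beta}MU\wedge M^+_{<\beta}$ over $BT$, not an element of a cohomology ring, so the non-zero-divisor statement does not directly apply to it. What is missing is the mechanism that converts this map into a class on which multiplication by the Euler class makes sense. The paper does this by (i) invoking the inductive hypothesis that $MU\wedge M^+_{<\beta}$ has already been trivialized into a finite wedge of shifted constant $MU$-modules over $BT$, so that its fiberwise $MU$-dual is again such a wedge; (ii) using Poincar\'e duality on $M_{<\beta}$ to represent the classifying map as a class $\varepsilon_\beta\in MU^{1+\dim S_\beta}_{T,\mathrm{cpt}}(F_\beta\times M_{<\beta})$; and (iii) noting that the fibration becomes trivially split when pulled back from the Thom space of $N_\beta\to S_\beta$ to $M_{\le\beta}$, which upon restriction to $F_\beta$ yields the relation $e_T(N_\beta)\cdot\varepsilon_\beta=0$. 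Only at that point does the Atiyah--Bott Lemma give $\varepsilon_\beta=0$, and hence a splitting. Without steps (i)--(iii) the argument does not close, and step (i) in particular makes clear why this must be an induction: you need the previous stages to be already split in order to carry out the duality in (ii).
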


\subsection{Limitations and counterexamples.}
We indicate here why some restrictions in our theorems are needed, and where improvements could be made. 
\begin{enumerate}\itemsep0ex
\item We do \emph{not} produce preferred splittings in our theorems. Using pseudoholomorphic 
curve methods, the authors of \cite{BaiPomerleano} established the complex-oriented, generalized 
cohomology  version of Theorem~\ref{thm:spectorus}, with preferred splittings for $K(n)$-local 
theories.  It seems likely that a variant of the argument in \cite{BaiPomerleano} using the invariants of \cite{AbouzaidBai},  which produces pseudoholomorphic curve counts directly in $MU$,  would lead to a preferred splitting over $MU$.  Similarly, for actions on \emph{monotone} symplectic manifolds 
and their natural Hamiltonian lifts, a preferred choice of splitting in Theorem~\ref{thm:main} can be 
derived from the Floer construction in \cite{PomTel}. 
\item For ordinary homology, Theorem~\ref{thm:specKirwan} does not truly improve upon Theorem~\ref{thm:main}: 
Eilenberg-MacLane and homology splittings are equivalent.   
\item Theorems~\ref{thm:specformal} and~\ref{thm:spectorus} have alternative formulations in terms 
of global sections of the  bundles of spectra (the homotopy fixed-point spectra), 
regarded as modules over the spectrum of maps $\mathrm{Map}\!\left(BG;MU[\ell_G^{-1}]\right)$; 
see~\cite{MNN}.

\item As in  Kirwan's original theorem,  the results apply more generally to symplectic manifolds 
with proper moment map $\mu: M \to \mathfrak{g}^*$ such that the critical set of $||\mu||^2$ is compact.
\end{enumerate}

The next example shows the need to invert $\ell$ in Theorem~\ref{thm:main} (and~\ref{thm:specKirwan}).

\begin{ex} Consider $\mathbb{C} \times \mathbb{C}$ with $S^1 \times S^1$ acting as a product with weight 2 on each factor. 
Then the stacky quotient is an orbifold and we have an isomorphism  
\[
H^*_T(\mu^{-1}(0); \mathbb{Z}) \cong H^*(B(\mathbb{Z}/2\mathbb{Z})^2;\mathbb{Z}). 
\]
By the K\"unneth formula,  
\[
H^{\operatorname{odd}}(B(\mathbb{Z}/2\mathbb{Z})^2;\mathbb{Z}) \neq 0;
\] 
so there is no Kirwan surjection. 
\end{ex} 

The next two examples show the need for complex-oriented theories in Theorems~\ref{thm:specKirwan} and~\ref {thm:specformal}.

\begin{ex}
Consider the Hamiltonian action of $G=S^1$ act on $M=\mathbb{CP}^2$ induced from the weights 
$(0,1,1)$ on $\mathbb{C}^3$. Reduction at $\mu =1/2$ has quotient $Q=\mathbb{CP}^1$; the unstable 
strata are the fixed-point loci: the point $[1,0,0]$ and the line $[0,*,*]$. The map 
$Q\subset M_G$ is equivalent to the midpoint inclusion in the two-sided mapping cylinder 
from $Q$ to $\mathbb{CP}^\infty$ and to $\mathbb{CP}^\infty\times \mathbb{CP}^1$,
\[
Q\times\{1/2\} \subset \left(Q\times[0,1]\right) 	\:\textstyle{\coprod}_{Q\times\{0\}} \mathbb{CP}^\infty 
	\:\textstyle{\coprod}_{Q\times\{1\}} 
	\mathbb{CP}^\infty\times \mathbb{CP}^1,
\]
for the natural embedding $j$ and for $j\times\mathrm{Id}$, respectively. This inclusion is not stably 
split, because $Sq^2$ on $H^2(\mathbb{CP}^\infty)$ is non-zero, while  vanishing on $Q$.
\end{ex}

\begin{ex}
The natural action of $G=SU_2$ on $M=\mathbb{CP}^1$ has homotopy quotient~$M_G = BS^1$. 
This is not stably equivalent to $\mathbb{CP}^1\times BSU_2$, by the same $Sq^2$ observation. The homotopy quotients of the suspension spectrum $\Sigma^\infty \mathbb{CP}^1$ under 
the natural and trivial actions of $SU_2$ are inequivalent.  
\end{ex}

\begin{ex}
When $BG$ has torsion, the factorization \eqref{eq:factorKirwan} fails for the flag variety $M$, showing 
the need to invert $\ell_G$. 
\end{ex}

\subsection{Acknowledgments} 
D.P.~was partially supported by the NSF grant DMS-2306204.  He would like to thank Neil Strickland for 
correspondence concerning Morava $K$-theory,  Oscar Randall-Williams for clarifications 
concerning Appendix \ref{section:Noetherian},  and Shaoyun Bai for helpful discussions.  
C.T.~was partially supported by the Simons Collaboration of Global Categorical Symmetries, and the 
writing was completed during his stay at the KITP, partially supported by grant NSF-PHY~2309135.

\section{Quick review of $K_{p^r}(n)$, $\En$ and $BP$}

We recall some homotopy-theory background we will use. Some references are \cite{Ravenel}, 
\cite[\S1]{HoveyStrickland};   a gentler introduction to Morava theories is in \cite{Wurgler}.  

\subsection{Formal groups and $MU$.}
The complex cobordism ring spectrum $MU$ has coefficient ring 
\[
MU^*(pt) = \mathbb{Z}[x_1,x_2,\dots], \quad |x_n| = -2n;
\] 
over $\mathbb{Q}$ (but not integrally), we may take the $\mathbb{CP}^n$ as algebra generators. Quillen  
identified~$MU^*$ with Lazard's ground ring of the universal formal group law\footnote{The rings are graded 
and the group coordinate has degree $2$; the group law has degree $0$.} over $\mathbb{Z}$: 
the  group is $\mathrm{Spf}\, MU^*\left(\mathbb{CP}^\infty\right)$ with the natural 
commutative group structure of $\mathbb{CP}^\infty$ and coordinate $u$ given by the hyperplane class. 
The `logarithmic' coordinate change $\log u = \sum_{n\ge 0} [\mathbb{CP}^n]\frac{u^{n+1}}{n+1}$ 
converts the group law into  standard addition, at the cost of including rational coefficients.  

Quillen also showed that the $p$-localization $MU_{(p)}$ of $MU$ at a prime $p$ decomposes as a wedge 
of shifted copies of a complex-oriented, multiplicative cohomology theory $BP$, with coefficient ring
\[
BP^*(pt) = \mathbb{Z}_{(p)}[v_1,v_2,\dots] \quad |v_k| = -2(p^k - 1).
\]
This ring classifies (in a weaker sense) the universal $p$-local formal group law.\footnote
{More precisely: a formal group law over a $p$-local algebra $R$ is canonically isomorphic 
to one derived from a unique homomorphism $BP^*\to R$.} Recall now that a formal group law 
$F$ gives rise to its $l$-series in the ground ring, defined for $l \in \mathbb{Z}$ by: 
\begin{align}
[l]\gdot_{F}(u) =\underbrace{u\gplus_{F} u\gplus_{F} \cdots \gplus_{F} u}_{l \text{ times}}\:;  
\end{align}
\noindent
as written, this only makes sense for $l \geq 0$, but it extends to $l<0$ using the formal group 
inverse.  The generators of $BP^*$ may be chosen to satisfy
\[
[p]\gdot_F (u) = pu \gplus_{F} v_1 u^{p} \gplus_F v_2 u^{p^2} 
	\gplus_F \dots 
\]

\subsection{$MU$-module spectra.}
\label{sec:mumodules}
The spectra  we  consider will be objects in the derived category $D_{MU_{(p)}}$ of module 
spectra over the $p$-localization $MU_{(p)}$ of $MU$ at a prime $p$ \cite[II-III]{EKMM}.  
Thus,~$BP$ is an algebra in $D_{MU_{(p)}}$. Landweber's Exact Functor Theorem 
\cite{Landweber} allows the construction of additional $MU_{(p)}$-module spectra from $BP$: 
those theories, called \emph{Landweber exact}, are defined on spaces by\footnote{Subscript, or homology gradings, are opposite to the superscript cohomology gradings.}
\begin{equation}\label{eq:Landtensor}
\landw_*(X) := BP_*(X)\otimes_{BP^*} \landw_*
\end{equation}
with coefficients in any $BP_*$-module $\landw_*$ in which the sequence $p,v_1,v_2, \dots$ is 
regular. 
The tensoring operation in \eqref{eq:Landtensor} is then exact. Quotienting $BP$ by all 
the~$v_{>n}$ and inverting $v_n$ leads to~$E(n)$,  with coefficients
\[
E(n)^* = \mathbb{Z}_{(p)}[v_1, \dots, v_n, v_n^{-1}], \quad |v_k| = -2(p^k - 1)
\]
and $p$-series
\begin{equation}\label{eq:pseries}
[p]\gdot_F (u) = pu \gplus_F v_1 u^{p} \gplus_F v_2 u^{p^2} \gplus_F \dots \gplus_F v_nu^{p^n}.
\end{equation}

A more general procedure, due to Baas and Sullivan, allows us to quotient out $I_n$ from $E(n)$ 
and define $MU_{(p)}$-module spectra 
\[
K_p(n) = E(n)/I_n = E/I_n.
\]
In particular, we have
\[
K_p(n)^* = E(n)^*/I_n = E^*/I_n = \left(\mathbb{Z}/p \mathbb{Z}\right)[v_n^{\pm 1}].
\]
\noindent
$K_p(n)$ is known as the \emph{Morava $K$-theory} spectrum of height $n$ \cite{johwil}.  
We will also use variants of Morava $K$-theory,  denoted by $K_{p^r}(n)$,  which are $r$-fold  
extensions of the spectrum $K_{p}(n)$ \cite[\S 2]{Strickland}.  The resulting cohomology theories 
are multiplicative,  complex-orientable,   with coefficients 
\[
K_{p^r}(n)^{*} =\left(\mathbb{Z}/p^r \mathbb{Z}\right)[v_n,v_n^{-1}]. 
\]   

Bousfield localization of the spectra $E(n)$ at $K(n)$ defines new ring spectra $\En$, whose coefficient ring 
is described as a completion at $I_n$,
\[
\En{}^*(pt) = (E(n)^*)^\wedge_{I_n} = \mathbb{Z}_p[v_1, v_2, \dots, v_{n-1}, v_n^{\pm 1}]^\wedge_{I_n}.
\]
We refer to $\En$ as the \emph{Morava $E$-theory} spectrum. Unlike the $K(n)$, the  $\En$ admit 
$E_\infty$ structures compatible with that of $MU$ (these are even unique \cite[Theorem 8.3]{BakerRichter}).

\begin{rem} \label{rem:ncKtheory} 
The spectra $E(n)$, $K_{p^r}(n)$ can be constructed directly from $MU_{(p)}$, using the universality 
of the formal group law on $MU^*$. This realizes the $MU$-module structure on our theories. For $p>2$,  the ring spectra 
$K_{p^r}(n)$ are homotopy commutative \cite[Thm.~2.6]{Strickland}, whereas for $p=2$  they are only 
homotopy associative. Nevertheless, the image of the natural map $MU^* \to K_{2^r}(n)^*$ is central. 
In particular,  Chern classes of vector bundles define central elements of $K_{2^r}(n)^*$.  
For~$r=1$,  this can be confirmed from the explicit commutation relation \cite[Theorem~1.5]{Wurgler}: 
$K_{2}(n)^*(BU)$ is concentrated in even degrees,  while the Bockstein operation describing the commutator is odd.  
\end{rem}

\subsection{Some key facts.} \label{sec:facts}
In checking the Atiyah-Bott criterion, we will use some results which we collect here.
The first two are found in \cite[\S5]{HKR}. Let $E$ be one of the cohomology theories $\En$ or 
$K_{p^r}(n)$, and let $\frakm_0$ be the unique 
maximal graded ideal in $E^*(pt)$; for Morava $K_{p}(n)$ we understand this to be the zero ideal.  
\begin{enumerate}\itemsep1ex
\item   \label{lem:Hondagrouplaw}
Suppose $l=p^s l'$ with $p \nmid l'$.  Then 
\[ 
[l]\gdot_{E}u= u^{p^{sn}} x \quad \mod {\frakm_0[[u]]},  
\]
where $x \in E^*(pt)$ is a unit.  
\item  Let $\boldsymbol{\mu}_l\subset S^1$ be the group of  roots of unity of order $l=p^s l'$, 
with $p \nmid l'.$  Then $E^*(B\boldsymbol{\mu}_l)$ is a free module over $E^*(pt)$, with basis 
the pull-backs of $1,\cdots,u^{p^{sn}-1} \subset E^*(BS^1)$.   
\end{enumerate}
Relating the completions $\En$ to localizations and integral theories relies on the following application of 
Landweber's criterion \cite{Landweber2}:
\begin{enumerate}\itemsep1ex
\setcounter{enumi}{2}
\item For a finite $G$-CW complex $X$, the Atiyah-Hirzebruch spectral sequence for $MU^*$ converges:
\[
E_2^{p,q} = H^p_G(X; MU^q) \Rightarrow MU^*(X_G). 
\]
More precisely: for each $p$, all differentials originating on $H^p$ have bounded length, and the 
skeletal filtration on $X_G$ induces a complete Hausdorff topology on $MU^*(X_G)$. 
\end{enumerate}
\noindent
Since the differentials on $MU_{(p)}, BP, BP_p$ and $\En$ are induced from $MU$, the conclusions 
apply to all these theories. They also apply to Morava's $K$-theories, for the more basic reason 
that the Morava $K$-groups are finite for skeletally finite spaces.  

\begin{proof}[Proof of \S\ref{sec:facts}.3]
Landweber's criterion\footnote{The criterion applies to any skeletally finite $CW$ complex, such as $X_G$.} 
requires every rational class in $H^*_G(X;\mathbb{Q})$ to be pulled back under a map from $X_G$ 
to torsion-free space. In our case, the completion theorem \cite{AtiyahSegal} ensures that all classes in a fixed $H^{2k}_G(X;\mathbb{Q})$ are 
homogeneous components of Chern characters of elements in $K_G(X)\otimes\mathbb{Q}$, and are thus pulled 
back under a map from $X_G$ to the torsion-free space $BU$. 
For odd classes, we suspend $X$. 
\end{proof}

\section{Equivariant perfection for exotic cohomologies}
\label{sec:perfect}
We now prove the equivariant perfection of the Kirwan stratification for the Morava $K$-theories 
$K_{p^r}(n)$ and for Landweber-exact cohomology theories, such as $MU, BP$, or $\En$, any of which we 
denote by $\landw$. Note that ordinary cohomology is \emph{not} among them. 

Having chosen an invariant inner product on $\mathfrak{g}$, let $T \subset G$ be a maximal torus and 
fix a positive Weyl chamber $\mathfrak{t}_{+} \subset \mathfrak{g}$.    
The critical sets $C_{\beta}$ of the function $||\mu||^2$ are indexed by a partially ordered finite set $\mathcal{B} \subset \mathfrak{t}_{+}$. Even though $||\mu||^2$  need not be a Morse-Bott function, it defines a $G$-invariant 
stratification of $M$, with the negative gradient flow providing an equivariant retraction of each stratum $S_\beta$  onto the respective critical set $C_\beta$ \cite{Lerman}. 

\begin{proof} [Proof of Theorem~\ref{thm:MoravaKirwan}]
We  run the argument of \cite[Theorem 5.4]{Kirwan} to break the long exact sequences arising from 
successively attaching the strata $S_\beta$. Specifically, if $D_\beta$  
denotes the normal  disk bundle to $S_\beta\subset M$, we must check the \emph{short} exactness of the long Gysin 
sequence 
\begin{equation}\label{eq:split}
\dots \to \landw_G^*(D_\beta,\partial D_\beta) \xrightarrow{\ i_\beta\ }   \landw_G^*(D_\beta) \to  
\landw_G^*(\partial D_\beta) \to \dots.
\end{equation}
The structure of the strata and critical sets established in \cite{Kirwan} gives us $C_\beta = G\times_L C_\beta'$, 
where $L\subset G$ is the Levi subgroup centralizing $\beta$. Splitting \eqref{eq:split} is equivalent to checking exactness in  
\[
0 \to \landw_L^*(D_\beta | C_\beta', \partial D_\beta | C_\beta')  \xrightarrow{\ i_\beta\ }   
	\landw_L^*(D_\beta | C_\beta') \to  \landw_L^*(\partial D_\beta | C_\beta') \to 0,
\]
having restricted $D$ and $\partial D$ to the~$C_\beta'$. The first two spaces can be identified with $\landw^{*-\dim D_\beta}_L(C_\beta')$ 
and $\landw^*_L(C_\beta')$, respectively, whereupon $i_\beta$ becomes multiplication by the $L$-equivariant 
Euler class of $D_\beta$. Splitting then follows from the Atiyah-Bott lemma for $\landw$, applied to the 
subgroup $S^1\subset L$ generated by $\beta$, which we prove next.
\end{proof}

\begin{lem} [Atiyah-Bott Lemma for $\landw$] \label{lem:AtiyahBott1} 
Let $L$ be a compact, connected Lie group and $N$ a finite $CW$ complex with $L$ action.  
Let $V\to N$ be an $L$-equivariant complex vector bundle over $N$.   Assume that $L$ contains 
a central circle subgroup $S^1 \subset L$ which fixes precisely the zero section~$N \subset V$.  Then the Euler class $e_L(V)$ is not a zero divisor in  $\landw^*(N_L)$.  
\end{lem}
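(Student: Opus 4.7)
The plan is to exploit centrality of $S^1\subset L$ via an $S^1$-weight decomposition, reduce by the splitting principle to Chern classes of line bundles with nonzero weights, and conclude by a Weierstrass-preparation argument using \S\ref{sec:facts}(1).

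First I would decompose $V=\bigoplus_{k\ne 0}V_k$ into $S^1$-isotypic components; centrality makes this $L$-equivariant, and the fixed-point hypothesis forces all weights $k\ne 0$. Multiplicativity gives $e_L(V)=\prod_k e_L(V_k)$, so it suffices to treat each $V_k$. Next, form the $L$-equivariant flag bundle $\pi\colon F_k\to N$ of $V_k$. The crucial observation is that $S^1$ acts by scalars on every fiber of $V_k$, hence acts \emph{trivially} on $F_k$. The pullback $\pi^*V_k$ then acquires an $L$-equivariant filtration with line-bundle subquotients $L_{k,1},\dots,L_{k,r_k}$, each of pure $S^1$-weight $k$. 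Leray--Hirsch (valid because $\landw^*$ of a flag variety is free over $\landw^*$) provides a split injection $\pi^*\colon\landw^*_L(N)\hookrightarrow\landw^*_L(F_k)$ with $e_L(V_k)|_{F_k}=\prod_j c_1^L(L_{k,j})$, so it suffices to show each $c_1^L(L_{k,j})$ is a non-zero-divisor.

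Then I would choose a maximal torus $T\supset S^1$ in $L$ and use the split injection $\landw^*_L(F_k)\hookrightarrow\landw^*_T(F_k)$ coming from the Weyl-invariants description (in the Morava case this needs the Noetherian framework of the paper's Appendix~\ref{section:Noetherian}). Pick a splitting $T=S^1\times T'$. Since $S^1$ acts trivially on $F_k$ and $\landw^*(BS^1)=\landw^*[[u]]$ is flat over $\landw^*$, K\"unneth gives $\landw^*_T(F_k)=R[[u]]$ with $R:=\landw^*_{T'}(F_k)$. The character $\chi_{-k}$ extends from $S^1$ to $T$ (trivially on $T'$), so $L_{k,j}^0:=L_{k,j}\otimes\chi_{-k}$ is a $T'$-equivariant line bundle of trivial $S^1$-weight, and
\[
c_1^T(L_{k,j})=c_1^{T'}(L_{k,j}^0)\gplus_F[k]\gdot_F u =: a\gplus_F[k]u,\qquad a\in R.
\]
This reduces the lemma to the algebraic claim: $a\gplus_F[k]u$ is a non-zero-divisor in $R[[u]]$.

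The algebraic claim I would prove using \S\ref{sec:facts}(1): $[k]\gdot_F u\equiv c\,u^{p^{sn}}\pmod{\frakm_0[[u]]}$ for a unit $c$ (with $k=p^s k'$, $p\nmid k'$). Since $R$ is an augmented complete $\landw^*$-algebra and $a$ lies in the augmentation ideal, $a$ is topologically nilpotent. Modulo $a$, the element $a\gplus_F[k]u$ reduces to the Weierstrass series $\overline{[k]u}$ with unit leading $u^{p^{sn}}$-coefficient, a non-zero-divisor in $(R/a)[[u]]$. A snake-lemma induction on the $a$-adic filtration, via $0\to aR[[u]]\to R[[u]]\to(R/a)[[u]]\to 0$ and the topological nilpotence of $a$, then lifts the non-zero-divisor property to $R[[u]]$.

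The main obstacle lies in the second step: rigorously establishing both the injection $\landw^*_L(F_k)\hookrightarrow\landw^*_T(F_k)$ and the K\"unneth identification $\landw^*_T(F_k)=R[[u]]$ in the Morava $K$-theory setting, where torsion and non-Noetherian issues enter; these technical subtleties are handled in the paper's appendix. The Weierstrass induction itself is conceptually clean once $R$ is known to be an augmented $\landw^*$-algebra with topologically nilpotent positive-degree part.
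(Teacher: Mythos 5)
Your plan diverges from the paper's proof in two places where it develops genuine gaps, and it also leaves untouched a large part of the statement.

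First, your reduction to the maximal torus is not available in Morava $K$-theory. You appeal to a split injection $\landw^*_L(F_k)\hookrightarrow\landw^*_T(F_k)$ ``coming from the Weyl-invariants description.'' The only general source of such a splitting is the Becker--Gottlieb transfer for the $L/T$-bundle $X_T\to X_L$, for which $\tau\circ\pi^*$ is multiplication by the \emph{ordinary} Euler characteristic $\chi(L/T)=|W|$ in any cohomology theory; this gives injectivity only when $|W|$ is a unit, which fails for $K_p(n)$ whenever $p$ divides $|W|$. The paper avoids this trap entirely: it never goes to a maximal torus. Instead it writes $L=(S^1\times H)/\boldsymbol{\mu}_l$ with $H$ compact connected, and uses Lemma~\ref{lem: finitecovering} -- a Leray--Hirsch argument for the $B\boldsymbol{\mu}_l$-fibration $N_{S^1\times H}\to N_L$, with fiber cohomology free by \S\ref{sec:facts}.2 -- to see that $E^*(N_L)\to E^*(N_{S^1\times H})$ is a split injection. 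This is both weaker and sufficient: one only has to split off the central circle, not the whole group. (The Noetherian Appendix~\ref{section:Noetherian} does not supply what you need; it proves $K_{p^r}(n)^*(N_G)$ is Noetherian, not that restriction to the torus is injective.)

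Second, your Weierstrass step as written relies on $a=c_1^{T'}(L_{k,j}^0)$ lying in the augmentation ideal of $R=\landw^*_{T'}(F_k)$ over $\landw^*$. That is unjustified: the restriction of $L_{k,j}^0$ to a point of $F_k$ can be a nontrivial $T'$-character, so $a$ can be nonzero already over $BT'$. The paper's filtration argument is organized differently and correctly: after reducing to $E^*(N_{S^1\times H})=E^*(N_H)[[u]]$, it observes (using \S\ref{sec:facts}.3, completeness of the skeletal filtration, plus that $E^*$ is a graded Noetherian local ring) that $E^*(N_H)$ is a graded local ring whose maximal ideal $\mathfrak{m}$ has Hausdorff adic filtration; then $e_{S^1\times H}(V)\equiv u^k x$ mod $\mathfrak{m}[[u]]$ with $x$ a unit, and the ``lowest $\mathfrak{m}$-filtration level where $\nu$ lives'' trick finishes it. Your proposed induction on powers of $a$ and the claim that modulo $a$ the Euler class is Weierstrass has no analogue of this because the relevant filtration is by $\mathfrak{m}$, not by one Chern class.

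Finally, the statement is for general $\landw$: Morava $E$- and $K$-theories, but also $BP$, $MU$, and arbitrary Landweber-exact theories. Your argument (even if repaired) only addresses the complete local theories $\En$ and $K_{p^r}(n)$. The $BP$ case requires detecting a nonzero $BP^*$-class on a finite skeleton and passing to $\En$ for large height; $MU$ requires comparing with $MU_{(p)}$ via $BP$ and using Atiyah--Hirzebruch collapse on finite skeleta; and Landweber-exact theories require the uniform (stable) vanishing of kernels in Proposition~\ref{prop:kerstability}, proved in Appendix~A. None of this appears in your proposal, so at best it is a plan for roughly the first third of the lemma's content.

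The flag-bundle/splitting-principle preliminary is a reasonable reduction, and the end-game using \S\ref{sec:facts}.1 is in the same spirit as the paper's, but the two steps flagged above are genuine obstructions, and the case analysis for $BP$, $MU$, and Landweber-exact theories is missing.
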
 

\noindent
We first prove this for the theories $\En$ or~$K_{p^r}(n)$, either of which we call $E$. This case suffices 
for the cohomological surjectivity statement, Theorem~\ref{thm:main}, proved in the next section. 
First, a small simplification. Let $L:= (S^1 \times H)/\boldsymbol{\mu}_l$, with a compact,  
connected Lie group $H$ and a centrally embedded 
$\boldsymbol{\mu}_l\subset S^1 \times H$ 
projecting faithfully to $S^1$, let $p^s$ be the largest $p$-power in $l$, and call $u$ and $\omega$ 
the generating classes for $E^2(BS^1)$ and $E^2(BS^1/\boldsymbol{\mu}_l)$, respectively. 
We use the same notation for their respective pull-backs to $B(S^1\times H)$ and $BL$.

\begin{lem}[Reduction to products] \label{lem: finitecovering} 
Let $N$ be a finite CW complex with an action of $L$.  Then,
\[
E^*(N_{S^1\times H}) = 
	E^*(N_{L})[[u]]\:/\left([l]\cdot_E u = \omega\right).
\]
In particular, $E^*(N_{S^1\times H})$ is a free right module of rank $p^{sn}$ over  $E^*(N_{L})$, 
and the pull-back 
\begin{align*} E^*(N_{L}) \to E^*(N_{S^1\times H}) 
\end{align*} 
is injective. 
\end{lem}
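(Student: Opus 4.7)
The plan is to realize this as a Leray--Hirsch computation for the fibration
\[
B\boldsymbol{\mu}_l \to N_{S^1\times H} \to N_L
\]
arising from the central extension $1 \to \boldsymbol{\mu}_l \to S^1\times H \to L \to 1$ applied to the Borel construction on $N$. Equivalently, one has a pullback square $N_{S^1\times H} = N_L \times_{BL} B(S^1\times H)$, so everything reduces to understanding the universal fibration over $BL$.

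First, I would apply Leray--Hirsch in $E$-cohomology. By fact (2) of \S\ref{sec:facts}, the fiber $B\boldsymbol{\mu}_l$ has $E^*$-cohomology free of rank $p^{sn}$ over $E^*$, with basis the pullbacks of $1, u, \ldots, u^{p^{sn}-1}$ from $E^*(BS^1)$. Since these classes extend globally through the composition $N_{S^1\times H} \to B(S^1\times H) \to BS^1$, and $E^*(B\boldsymbol{\mu}_l)$ is concentrated in even degrees and free over $E^*$, the Atiyah--Hirzebruch--Serre spectral sequence collapses formally at $E_2$. Leray--Hirsch then yields that $E^*(N_{S^1\times H})$ is free of rank $p^{sn}$ over $E^*(N_L)$ with basis $1, u, \ldots, u^{p^{sn}-1}$. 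Convergence on the infinite-dimensional Borel construction is handled skeletally, using the completeness of the skeletal filtration guaranteed by fact (3).

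Next, I would identify the multiplicative relation. The class $\omega \in E^2(BS^1/\boldsymbol{\mu}_l) \subset E^2(BL)$ pulls back to $B(S^1\times H)$ through the composition $BS^1 \to BS^1/\boldsymbol{\mu}_l \cong BS^1$, which is multiplication by $l$ on $S^1$; in $E^2(BS^1)$ this corresponds to multiplication by $l$ in the formal group, i.e.\ to $[l]\gdot_E u$. Pulling back further to $N_{S^1\times H}$ preserves the identity, yielding the presentation
\[
E^*(N_{S^1\times H}) = E^*(N_L)[[u]] \big/ \bigl([l]\gdot_E u - \omega\bigr).
\]
Freeness of rank $p^{sn}$ then follows from Weierstrass preparation: by fact (1), $[l]\gdot_E u \equiv u^{p^{sn}} x$ modulo $\frakm_0$ with $x$ a unit, so $[l]\gdot_E u - \omega$ is distinguished of degree $p^{sn}$ in $u$, and the quotient is free over $E^*(N_L)$ with basis $1, u, \ldots, u^{p^{sn}-1}$. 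Injectivity of the pullback $E^*(N_L) \to E^*(N_{S^1\times H})$ is then immediate, since $1$ is part of a free basis.

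The main technical obstacle is the Leray--Hirsch argument on the infinite-dimensional Borel constructions in a generalized cohomology theory: one must make sure that collapse and freeness survive passage to the inverse limit over skeleta of $N_L$, which is precisely what the convergence statement in fact (3) guarantees. A secondary subtlety arises at $p=2$ for $K_{p^r}(n)$, where the ring spectrum is only homotopy associative; this is handled by Remark~\ref{rem:ncKtheory}, which ensures that the Chern class $u$ is central, so all the commutative-algebra manipulations with $[l]\gdot_E u$ and $\omega$ go through without modification.
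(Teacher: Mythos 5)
Your proposal is correct and takes essentially the same route as the paper: both use the fibration $B\boldsymbol{\mu}_l \to N_{S^1\times H} \to N_L$, invoke fact~(2) of \S\ref{sec:facts} for the fiber cohomology, and apply Leray--Hirsch, with the relation $[l]\gdot_E u = \omega$ pulled back from the $l$-fold map $BS^1 \to BS^1/\boldsymbol{\mu}_l$. Your additional remarks on skeletal convergence, Weierstrass preparation for the explicit presentation, and centrality at $p=2$ are sensible elaborations but do not change the argument.
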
 

\begin{proof} 
The relation $[l]\cdot_E u = \omega$ is pulled back from the 
$l$-fold multiplication map $BS^1\to BS^1/\boldsymbol{\mu}_l$. 
To see the isomorphism, note the fibration
\begin{equation} \label{eq:fiberBA} 
B\boldsymbol{\mu}_l \to  N_{S^1 \times H} \to  N_{L};
\end{equation} 
the composition  
\[
B\boldsymbol{\mu}_l \to  N_{S^1 \times H} \to  BS^1,
\] 
corresponds to the faithful 
projection $\boldsymbol{\mu}_l \to S^1.$  In view of \S\ref{sec:facts}.2,  the classes 
$1,\cdots,u^{p^{sn}-1}$ freely generate the $E^*$-cohomology of any fiber 
of \eqref{eq:fiberBA}.  It now follows from the Leray-Hirsch theorem that 
\begin{align} 
E^*(N_{S^1 \times H}) \cong E^*(B\boldsymbol{\mu}_l)\otimes_{E_{*}} E^*(N_{L}),
\end{align}  
completing the proof. 
\end{proof}

\begin{proof}[Proof of Lemma~\ref{lem:AtiyahBott1} for $E^*$]
By splitting the Lie algebra, we can find a complementary connected subgroup $H \subset L$ as in 
Lemma~\ref{lem: finitecovering}, with $L= (S^1\times H)/\boldsymbol{\mu}_l$. Injectivity in that 
lemma reduces us to the group $S^1 \times H$.  Because $S^1$ acts trivially on $N$,  
\begin{align} 
E^*(N_{S^1 \times H})= E^*(N_H)[[u]]. 
\end{align} 
The skeletal filtration on $E^*(N_H)$ is complete, Hausdorff (\S\ref{sec:facts}.3), and pro-nilpotent 
(always). Assume without loss of generality that $N$ is connected. Recalling that $(E^*,\mathfrak{m}_0)$ 
is a graded Noetherian local ring, it follows that $E^*(N_H)$ is also a graded local ring, with 
graded maximal ideal $\mathfrak{m}$ pulled back from 
$\mathfrak{m}_0$ under evaluation at a 
base-point in $N_H$. Moreover, the $\mathfrak{m}$-adic filtration is Hausdorff.  
With $k = \sum p^{nw}$, where the $w$ run through the $p$-exponents of the $S^1$-weights 
on $V$, we obtain from \S\ref{sec:facts}.1  
\begin{align} \label{eq:restrictioneuler} 
e_{S^1\times H}(V)= u^kx + m \in E^*(N_{S^1 \times H}),  
\end{align}  
where $x \in E^*(pt)$ is a unit and $m\in \mathfrak{m}[[u]]$.  

A non-zero element $\nu \in E^*(N_{H})[[u]]$  lies in $\mathfrak{m}^d[[u]] \setminus \mathfrak{m}^{d+1}[[u]]$ 
for some $d$. If $e(V)\cdot \nu = 0$, then
\[
u^kx\cdot \nu = -m\cdot\nu \in \mathfrak{m}^{d+1}[[u]],
\]
leading to the contradiction $\nu\in \mathfrak{m}^{d+1}[[u]]$. 
\end{proof}

\begin{proof}[Proof of Lemma~\ref{lem:AtiyahBott1} for $BP$] 
From \S\ref{sec:facts}.3, $BP(N_L)$ agrees with its limit over the $r$-skeleta $(N_L)_{\le r}$. A 
non-zero $\alpha\in BP^*(N_L)$ is then non-zero on some finite skeleton $X=(N_L)_{\le r}$. 
Generators $v_m$ for large $m$ are prevented from appearing 
in~$\alpha$, by reason of degree. For a large enough height~$n$, we see (from Landweber's 
filtration theorem, or from the explicit estimates in \cite{jw2}) that~$\alpha$ has non-zero 
images~$\alpha(n)\in E(n)^*(N_L)$ and $\widehat{\alpha}(n)\in \En^*(N_L)$. We know that 
$e_L(V)\cdot\widehat{\alpha}(n)\neq 0$ in $\En^*(N_L)$. But this comes from the class 
$e_L(V)\cdot\alpha\in BP^{*+\dim V}(N_L)$, which therefore cannot vanish. 
\end{proof} 

\begin{rem}{\ }\label{effective}
\begin{enumerate}\itemsep0ex
\item A bound on the height $n$ needed in this proof comes from Theorems~1.1 and~2.2 in~\cite{jw2}: having 
fixed the cohomology degree $*$, $\En$ will detect $\alpha$ on  
$(N_L)_{\le q}$ for $n \sim \log_p(q)$.
\item In \cite{Barthel}, the authors construct a splitting of the natural map
\[
BP_p^*(N_L) \longrightarrow \prod\nolimits_{n>0} \En{}^*(N_L).
\]
This immediately implies the Atiyah-Bott Lemma for $BP_p$, since it holds on the right side. 
However, passing to the uncompleted $BP$ requires knowledge of the good skeletal behavior; see 
also our effective version in Appendix~A.
\end{enumerate}
\end{rem}

\begin{proof}[Proof of Lemma~\ref{lem:AtiyahBott1} for $MU$]
The natural formal group law on $MU_{(p)}$ is classified up to isomorphism by the map  $BP\to MU_{(p)}$. 
The Thom isomorphism pulled back thereunder differs by a unit form the natural one on $MU$, so 
the Euler class on the latter also has no zero-divisors. Turning to $MU$, the collapse of the 
Atiyah-Hirzebruch sequence allows us to check the statement by 
restricting to finite skeleta, 
just as for $BP$. On finite spaces, $MU^*$-cohomology 
commutes with $p$-localization, 
and we conclude the absence of zero-divisors for $MU$.      
\end{proof}

Passing to general Landweber-exact theories requires a stable version of Lemma~\ref{lem:AtiyahBott1}. 
Its proof, a bit more technical, is relegated to Appendix~A. 

\begin{prop}[Stable Atiyah-Bott Lemma for $BP$]
\label{prop:kerstability}
For any $q\ge 0$, there is an $r>q$ so that the kernel of 
$e_L(V)\cdot: BP^*\left(N_L\right) \to BP^{*+\dim V}\left((N_L)_{\le r}\right)$ 
vanishes upon restriction to $(N_L)_{\le q}$. 
\end{prop}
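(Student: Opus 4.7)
The plan is to obtain the effective version by strengthening the proof of the non-stable Atiyah-Bott lemma for $BP$, doing the work inside the Morava $E$-theory approximation and transferring back via Johnson--Wilson detection. Fix $q$. By the effective bound of Remark~\ref{effective}, choose $n$ of order $\log_p q$ so that $BP^*(X_q) \to \En^*(X_q)$ is injective, where $X_q := (N_L)_{\leq q}$. It then suffices to produce $r$, depending only on $q$, such that any $\nu \in \En^*(N_L)$ with $e_L(V) \cdot \nu|_{X_r} = 0$ satisfies $\nu|_{X_q} = 0$; the conclusion for $BP$ then follows by chasing through the detection map.

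I would then work in the Noetherian local ring $R := \En^*(N_L)$. Finiteness of $\En^*(BL)$ over $\En^*$ (from HKR) combined with finite generation of $R$ over $\En^*(BL)$ (via the fibration $N \to N_L \to BL$ and finiteness of $\En^*(N)$) makes $R$ a finite module over the Noetherian local ring~$\En^*$. Let $J_r := \ker(R \to \En^*(X_r))$; these are ideals forming a decreasing chain with $\bigcap_r J_r = 0$. The key technical input is cofinality of $\{J_r\}$ with the $\mathfrak{m}$-adic filtration of $R$ (with $\mathfrak{m}$ the maximal ideal), quantified by constants $a, c > 0$ with $\mathfrak{m}^{ar} \subseteq J_r \subseteq \mathfrak{m}^{cr}$ for large~$r$; this reduces, via naturality along $N_L \to BL$, to a version of the Atiyah--Segal/HKR completion theorem for~$BL$. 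Granted this, the formal-group calculation inside the proof of Lemma~\ref{lem:AtiyahBott1} gives an effective filtration estimate: writing $e_{S^1 \times H}(V) = u^k x + m$ with $x$ a unit and $m \in \mathfrak{m}[[u]]$, the earlier argument actually shows that for $\nu \in \mathfrak{m}^d \setminus \mathfrak{m}^{d+1}$ the product $e \cdot \nu$ lies in $\mathfrak{m}^{d+k} \setminus \mathfrak{m}^{d+k+1}$.

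Combining these ingredients: if $\nu$ restricts nontrivially to $X_q$, then $\nu \notin J_q$, hence $\nu \notin \mathfrak{m}^{aq}$ by the lower inclusion, so $\nu \in \mathfrak{m}^d \setminus \mathfrak{m}^{d+1}$ for some $d < aq$; the filtration estimate then gives $e\nu \notin \mathfrak{m}^{aq + k}$, and choosing $r$ with $cr \geq aq + k + 1$ yields $e\nu \notin \mathfrak{m}^{cr} \supseteq J_r$ by the upper inclusion. Thus $r = \lceil (aq+k+1)/c \rceil$ works uniformly in $\nu$. The main obstacle is the effective cofinal comparison of the skeletal and $\mathfrak{m}$-adic filtrations on $\En^*(N_L)$, which is routine for $BL$ itself but requires care for the total space of the fibration $N_L \to BL$, since the constants $a, c$ must be tracked through a Serre-type spectral-sequence comparison; I expect this to be the bulk of the technical content of Appendix~A.
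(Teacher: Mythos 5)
Your approach diverges substantially from the paper's. Appendix~A works directly in $BP$: it applies Landweber's filtration theorem to $BP^*\left((N_H)_{<2q}\right)$, getting slices $BP^*/I_{n_j}$ of bounded height, and then traces multiplication by $\Pi^q$ (where $\Pi = \prod_w [x_wp^w]\gdot_{BP}u$) explicitly through each slice, measuring loss in the $u$-adic (skeletal) filtration on the $BS^1$-factor. Descent from $S^1\times H$ to $L$ is handled by a separate lens-space/Gysin lemma. There is no passage through $\En$ or through an $\mathfrak{m}$-adic filtration.

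The central gap in your proposal is the ``cofinality'' inclusion $\mathfrak{m}^{ar}\subseteq J_r$. Taking $\mathfrak{m}$ literally as the maximal ideal of $R=\En^*(N_L)$, it contains the pullbacks of $p$ and of $v_1,\dots,v_{n-1}$, and $p^{ar}\in\mathfrak{m}^{ar}$ while $p^{ar}$ restricts nontrivially to \emph{every} skeleton $(N_L)_{\le r}$ (because $\En^*\to\En^*((N_L)_{\le r})$ is split by a constant map). So $\mathfrak{m}^{ar}\subseteq J_r$ fails for every $a>0$, and step (a) of your final chain of implications ($\nu\notin J_q\Rightarrow\nu\notin\mathfrak{m}^{aq}$) does not go through. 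If instead you intend $\mathfrak{m}$ to be the augmentation or topological-filtration ideal, the difficulty migrates: the filtration-jump assertion ``$\nu\in\mathfrak{m}^d\setminus\mathfrak{m}^{d+1}\Rightarrow e\cdot\nu\in\mathfrak{m}^{d+k}\setminus\mathfrak{m}^{d+k+1}$'' is not what the non-stable proof of Lemma~\ref{lem:AtiyahBott1} establishes. That proof uses the filtration $\mathfrak{m}_0^\bullet[[u]]$ by the maximal ideal of $E^*(N_H)$ (which explicitly excludes $u$) and shows only that $e\cdot\nu$ stays in the same $\mathfrak{m}_0$-degree $d$; the leading term $u^kx\nu$ does not raise the $\mathfrak{m}_0$-degree, and the lower-order contribution $m\nu$ with $m\in\mathfrak{m}_0[[u]]$ need not lie in $(\mathfrak{m}_0+(u))^{d+k+1}$. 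Proving the jump by $k$ with respect to a topologically meaningful filtration is essentially the content of the effective statement you are trying to prove, so as written this is circular. The paper sidesteps all this by never invoking a single mediating $\mathfrak{m}$-adic filtration: the factorization $e(V)\cdot\sum_i\Pi^{q-i}e_+(u)^{i-1}=\Pi^q$ (using nilpotence of $e_+$ on the $2q$-skeleton) converts the problem to $\Pi^q$-multiplication over a point, which is then controlled slice by slice via the explicit $p$-series identities~\eqref{eq:upowers}.
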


\begin{proof}[Proof of Lemma~\ref{lem:AtiyahBott1} for Landweber-exact theories.]
For a finite $CW$ complex $X$, 
\[
\landw^*(X) = BP^*(X)\otimes_{BP^*}\landw^*.
\]
Strong Atiyah-Hirzebruch convergence ~\S\ref{sec:facts}.3 ensures that  
$\landw^*(N_L) = \lim_r  \landw^*\left((N_L)_{\le r}\right)$ and $\lim^1=0$. Proposition~\ref
{prop:kerstability} ensures that the kernels of $e_L(V)\cdot$ on $BP^*\left((N_L)_{\le r}\right)$ 
stabilize to zero. So, then, do the kernels on $\landw^*$, which are
obtained by tensoring with $\landw^*$. Since kernels commute with limits, it follows 
that $e_L(V)\cdot$ is injective on $\landw^*(N_L)$.
\end{proof}

\section{Integral Kirwan surjectivity}
\label{sec:integralcohomologyKir}

\begin{lem} \label{lem:truncation} Let $X$ be a finite CW complex,  $Y$ a skeletally finite CW complex, 
and $f:X \to Y$ a map.  Fix $p,r$ and suppose that for each height $n$,  
the map $f^*: K_{p^r}(n)^*(Y) \to  K_{p^r}(n)^*(X)$ is surjective.  Then, so is the map on cohomology 
\begin{align} \label{eq:surjective} 
f^*: H^*(Y; \mathbb{Z}/p^r \mathbb{Z}) \to H^*(X; \mathbb{Z}/p^r \mathbb{Z}). 
\end{align}
\end{lem}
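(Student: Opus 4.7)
My plan is to exploit the fact that $K_{p^r}(n)$-cohomology and ordinary mod-$p^r$ cohomology agree on sufficiently low-dimensional $CW$ complexes, once the height $n$ is taken large enough. The key observation is that $K_{p^r}(n)^* = (\mathbb{Z}/p^r)[v_n^{\pm 1}]$ is concentrated in degrees divisible by $D := 2(p^n-1)$, so the $E_2$-page of the Atiyah--Hirzebruch spectral sequence $E_2^{p,q} = H^p(Z; K_{p^r}(n)^q) \Rightarrow K_{p^r}(n)^{p+q}(Z)$ has nonzero rows only in $q \in D\mathbb{Z}$, forcing the first possibly nontrivial differential to be $d_{D+1}$. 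For any $CW$ complex $Z$ of dimension strictly less than $D$, no such differential can act, so the AHSS collapses at $E_2$ and one obtains natural edge isomorphisms $K_{p^r}(n)^j(Z) \cong H^j(Z;\mathbb{Z}/p^r)$ for $0\le j<D$.

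Next I would argue degree by degree. Given $\alpha \in H^k(X;\mathbb{Z}/p^r)$, I choose $n$ so that $D > \max(\dim X, k+1)$, and replace $f$ by a cellular approximation, ensuring $f(X^{(j)}) \subset Y^{(j)}$ for all $j$. Viewing $\alpha$ as an element of $K_{p^r}(n)^k(X)$ via the above edge isomorphism, the hypothesis lifts it to a class $\tilde\beta \in K_{p^r}(n)^k(Y)$. I then restrict $\tilde\beta$ to the finite skeleton $Y^{(k+1)}$ (finite by skeletal finiteness of $Y$, and of dimension $\le k+1 < D$); the edge isomorphism there produces a class in $H^k(Y^{(k+1)};\mathbb{Z}/p^r)$. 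Since $Y$ is built from $Y^{(k+1)}$ by attaching cells of dimension $\ge k+2$, the restriction $H^k(Y;\mathbb{Z}/p^r) \to H^k(Y^{(k+1)};\mathbb{Z}/p^r)$ is an isomorphism, yielding the desired $\beta \in H^k(Y;\mathbb{Z}/p^r)$. Naturality of the edge map under the square of $(k+1)$-skeletal restrictions, combined with the analogous isomorphism $H^k(X;\mathbb{Z}/p^r) \cong H^k(X^{(k+1)};\mathbb{Z}/p^r)$, then forces $f^*\beta = \alpha$.

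The main obstacle I foresee is making the naturality of the edge identification explicit in the square relating $K_{p^r}(n)^k$ to $H^k$ via $(k+1)$-skeletal restriction: this is standard, but must be tracked carefully because the actual comparison between $\tilde\beta$ and $\alpha$ is effected on the skeleta after cellular approximation, not on $X$ and $Y$ themselves. A related subtlety is that $Y$ need not be finite-dimensional, so its AHSS need not collapse; the argument sidesteps this by invoking the collapse only on the finite complexes $X$ and $Y^{(k+1)}$, which is exactly where the skeletal-finiteness hypothesis earns its keep.
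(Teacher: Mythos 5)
Your proposal is correct and follows essentially the same strategy as the paper's proof: factor $f$ through a finite skeleton of $Y$ via cellular approximation, observe that for large height $n$ the Atiyah--Hirzebruch spectral sequence for $K_{p^r}(n)$ collapses on low-dimensional complexes and reproduces mod-$p^r$ cohomology, and translate the Morava surjectivity into ordinary surjectivity via the resulting edge isomorphisms. The only cosmetic difference is that the paper chooses one truncation $Y_{\le m}$ with $m > \dim X$ and one height, handling all degrees at once, whereas you argue degree by degree with a separate height and skeleton for each $k$.
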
 
\begin{proof} By cellular approximation,   $f$ factors through the $m$-skeleton 
$Y_{\leq m}$ of $Y$ for $m> \operatorname{dim}(X)$. The restricted map  
\begin{align} \label{eq:Moravasurjective} 
f^*: K_{p^{r}}(n)^* (Y_{\leq m}) \to K_{p^{r}}(n)^* (X) 
\end{align} 
is then surjective.  Choose a large height $n$, so that  $|v_n|>m+1$. The Atiyah-Hirzebruch spectral sequences 
\begin{align} 
H^p(Y_{\leq m}; K_{p^{r}}(n)^{q}(pt)) &\Rightarrow K_{p^{r}}(n)^{p+q} (Y_{\leq m}), \\ 
\nonumber H^p(X; K_{p^{r}}(n)^{q}(pt)) &\Rightarrow K_{p^r}(n)^{p+q} (X), 
\end{align} 
collapse for grading reasons.  In degrees $d \in [0,m]$, $E_\infty$ is identified with $H^d(Y_{\leq m}; 
\mathbb{Z}/p^r \mathbb{Z})$ and $H^d(X; \mathbb{Z}/p^r \mathbb{Z})$, respectively, and the induced filtration 
is trivial.  Surjectivity of \eqref{eq:Moravasurjective} then
implies the same for the map 
\begin{align} \label{eq:kequalsh}
H^*(Y_{\leq m}; \mathbb{Z}/p^r \mathbb{Z}) \to H^*(X; \mathbb{Z}/p^r \mathbb{Z}). 
\end{align} 
Finally, the restriction $H^d(Y; \mathbb{Z}/p^r \mathbb{Z}) \to H^d(Y_{\leq m}; \mathbb{Z}/p^r \mathbb{Z})$ is an isomorphism for  $d <m$.  The lemma follows.     
\end{proof}

\begin{proof}[Proof of Theorem \ref{thm:main}.] 
For clarity, we first address the case where $G$ acts freely on $\mu^{-1}(0)$. 
Lemma~\ref{lem:truncation} and Theorem \ref{thm:MoravaKirwan} applied to $X=\mu^{-1}(0)/G$ and $Y=M_G$ imply 
the surjectivity of the map 
\begin{align} \label{eq:kirwanmapZp} 
H^*_G(M; \mathbb{Z}/p^r\mathbb{Z}) \to  
	H^*(\mu^{-1}(0)/G; \mathbb{Z}/p^r\mathbb{Z}) 
\end{align} 
for all primes $p$ and $r \geq 1.$  The universal coefficients theorem now implies that its integral version
\begin{align} \label{eq:kirwanmapZ} 
H^*_G(M; \mathbb{Z}) \to  
	H^*(\mu^{-1}(0)/G; \mathbb{Z}) 
\end{align} 
is surjective, and that the $p^r$-torsion subgroup of $H^*_G(M; \mathbb{Z})$ surjects onto the $p^r$-torsion 
subgroup of $H^*(\mu^{-1}(0)/G; \mathbb{Z})$.  It follows that  \eqref{eq:kirwanmapZ} is a split epimorphism: 
this applies to any surjection $A_0 \to A_1$ of finitely generated abelian groups which 
is surjective on all $p^r$-torsion subgroups.  

Finite stabilizers  in $\mu^{-1}(0)$ require one more topology input. Let $p$ be any prime which does not 
divide $\ell$, and  denote by $L_{(p)}$ the Bousfield localization functor on spaces  with respect to 
$\mathbb{Z}_{(p)}$-homology \cite{Bousfield}. The map $\mu^{-1}(0)_G \to \mu^{-1}(0)/G$ is a 
$\mathbb{Z}_{(p)}$-local equivalence; by the universal property of localization, it determines a map  
\[
\mu^{-1}(0)/G \to L_{(p)}(\mu^{-1}(0)_G),
\] 
which induces isomorphisms in $K_{p^r}(n)$.  The composition $\mu^{-1}(0)/G  \to L_{(p)}(M_G)$ then factors 
through a finite truncation of $L_{(p)}(M_G)$, and the previous argument applies.   \end{proof} 

\section{Equivariant formality} 
\label{section:formality}
We now use  Morava $K$-theory to deduce an integral refinement of 
Kirwan's equivariant formality. While this follows from  $MU$-formality, Theorem~\ref
{thm:specformal}, proved in the next section, we give here a more elementary counting argument. 
As before, $\ell_G$ is the product of the torsion primes for $BG$. 

\begin{thm}\label{thm:cohomology} 
There is a (non-canonical) isomorphism of $H^*(BG; \mathbb{Z}[\ell_G^{-1}])$-modules 
\begin{align*} 
H_G^*(M;\mathbb{Z}[\ell_G^{-1}]) \cong 
H^*(M;\mathbb{Z})\otimes_{\mathbb{Z}} H^*(BG; \mathbb{Z}[\ell_G^{-1}]). 
\end{align*}   
\end{thm}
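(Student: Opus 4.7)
The plan is to imitate the strategy from the proof of Theorem~\ref{thm:main}: establish the Morava $K$-theoretic analog prime by prime, then descend to mod $p^r$ cohomology via an argument in the spirit of Lemma~\ref{lem:truncation}, and finally reassemble the integral statement after inverting $\ell_G$. For each prime $p \nmid \ell_G$ and each $r, n \geq 1$, the target Morava statement is
\[
K_{p^r}(n)^*(M_G) \cong K_{p^r}(n)^*(M) \otimes_{K_{p^r}(n)^*} K_{p^r}(n)^*(BG).
\]

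First I handle $G = T$ a torus. Using a generic $\xi \in \mathfrak{t}$, the function $\mu \cdot \xi$ is Morse-Bott with critical set $M^T = \bigsqcup_\alpha F_\alpha$. Since equivariant Euler classes of the normal bundles are non-zero-divisors in $K_{p^r}(n)$ by Lemma~\ref{lem:AtiyahBott1}, Morava $K$-theoretic Morse theory produces compatible decompositions
\[
K_{p^r}(n)^*(M) \cong \bigoplus\nolimits_\alpha K_{p^r}(n)^{*-\lambda_\alpha}(F_\alpha), \qquad K_{p^r}(n)^*(M_T) \cong \bigoplus\nolimits_\alpha K_{p^r}(n)^{*-\lambda_\alpha}\bigl((F_\alpha)_T\bigr).
\]
Since $T$ acts trivially on each $F_\alpha$, Leray-Hirsch for $F_\alpha \to (F_\alpha)_T \to BT$ (valid since $K_{p^r}(n)^*(BT)$ is free over $K_{p^r}(n)^*$) identifies the second decomposition with $K_{p^r}(n)^*(M) \otimes_{K_{p^r}(n)^*} K_{p^r}(n)^*(BT)$, giving torus formality.

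For a general connected compact $G$ with $p \nmid \ell_G$, I descend along the flag bundle $G/T \to M_T \to M_G$. Because $G$ has no $p$-torsion, $H^*(G/T; \mathbb{Z}_{(p)})$ is torsion-free and concentrated in even degrees by Schubert cells, so $K_{p^r}(n)^*(G/T)$ is free of rank $|W|$ over $K_{p^r}(n)^*$ with basis obtained by restriction of Schubert classes from $K_{p^r}(n)^*(BT)$. Leray-Hirsch then gives both
\[
K_{p^r}(n)^*(M_T) \cong K_{p^r}(n)^*(M_G) \otimes_{K_{p^r}(n)^*(BG)} K_{p^r}(n)^*(BT), \qquad K_{p^r}(n)^*(BT) \cong K_{p^r}(n)^*(BG) \otimes_{K_{p^r}(n)^*} K_{p^r}(n)^*(G/T),
\]
the latter exhibiting $K_{p^r}(n)^*(BT)$ as faithfully flat over $K_{p^r}(n)^*(BG)$. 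Comparing these with the torus-formality expression for $K_{p^r}(n)^*(M_T)$ and descending via this faithful flatness yields the general $G$ statement.

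To pass to ordinary cohomology, take $n$ large enough that $|v_n|$ exceeds any fixed cohomological degree of interest. The Atiyah-Hirzebruch spectral sequences for both sides then collapse for degree reasons, identifying $K_{p^r}(n)^*$ with $H^*(-; \mathbb{Z}/p^r)$ periodically in the relevant range and transferring the isomorphism above to an $H^*(BG;\mathbb{Z}/p^r)$-linear isomorphism
\[
H^*_G(M; \mathbb{Z}/p^r) \cong H^*(M; \mathbb{Z}/p^r) \otimes_{\mathbb{Z}/p^r} H^*(BG; \mathbb{Z}/p^r).
\]
Finite generation of $H^*(M; \mathbb{Z})$ in each degree, together with the universal coefficient theorem, then reassembles the mod-$p^r$ statements across all $p \nmid \ell_G$ and $r \geq 1$ into the stated $\mathbb{Z}[\ell_G^{-1}]$-splitting. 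The main obstacle is the descent from torus to general $G$: verifying the Morava $K$-theoretic avatars of the classical Borel facts that $K_{p^r}(n)^*(BT)$ is free over $K_{p^r}(n)^*(BG)$ and that Schubert classes generate $K_{p^r}(n)^*(G/T)$. These hold precisely because $p \nmid \ell_G$, but care is needed at primes dividing $|W|$ that are not torsion primes of $BG$, where the classical averaging argument over $W$ is unavailable.
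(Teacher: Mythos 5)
The central gap is in the torus step. You claim that because the \emph{equivariant} Euler classes of the normal bundles to the critical sets $F_\alpha$ are non-zero-divisors (Lemma~\ref{lem:AtiyahBott1}), Morse--Bott theory produces \emph{both} decompositions, including the non-equivariant one $K_{p^r}(n)^*(M) \cong \bigoplus_\alpha K_{p^r}(n)^{*-\lambda_\alpha}(F_\alpha)$. This does not follow. The Atiyah--Bott argument splits the \emph{equivariant} Gysin sequences because $e_T(N_\alpha)$ is not a zero-divisor in $K_{p^r}(n)^*((F_\alpha)_T)$; but the non-equivariant Euler class lives in $K_{p^r}(n)^*(F_\alpha)$, where it is typically \emph{nilpotent or zero} (e.g.\ when $F_\alpha$ is a point with $\lambda_\alpha>0$). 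So the non-equivariant perfection of the stratification is not automatic from Lemma~\ref{lem:AtiyahBott1}; it has to be \emph{deduced} from the equivariant perfection. The paper does this (Proposition~\ref{prop:prMoravacircle}) by proving the collapse of the Atiyah--Hirzebruch--Leray spectral sequence for $M \to M_T \to BT$. That collapse is not a formal triviality: since $K_{p^r}(n)^*$ is not a field when $r>1$, one cannot use a simple rank count over a graded field. The paper instead runs a cardinality argument over finite skeleta, bounding $|K_{p^r}(n)^*(M_{S^1,m})|$ above by $|K_{p^r}(n)^*(M)|^{m+1}$ and below using the leading-term structure \eqref{eq:restrictioneuler} of the Euler class, to force equality in the Morse inequalities. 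That entire step is missing from your proposal, and it is the one that makes the theorem work.

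Your descent from $T$ to $G$ is also a genuinely different route, and you rightly flag it as the ``main obstacle'' without closing it. You want to exhibit $K_{p^r}(n)^*(BT)$ as faithfully flat (indeed free) over $K_{p^r}(n)^*(BG)$ via a Morava Leray--Hirsch for $G/T \to BT \to BG$, and then descend an isomorphism of Morava rings. Establishing that the Schubert basis lifts in $K_{p^r}(n)^*$ and that this gives freeness over $K_{p^r}(n)^*(BG)$ (which is itself not obviously a power series ring for nonabelian $G$) is additional work that the paper avoids entirely. The paper descends \emph{after} returning to ordinary cohomology with $\mathbb{Z}/p^r$ coefficients: for $p \nmid \ell_G$, the map $\pi^*: H^*(BG;R) \to H^*(BT;R)$ is injective, so the morphism of Leray spectral sequences is injective on $E_2$, and degeneration of $E_*(M_T)$ propagates page by page to $E_*(M_G)$. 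That argument is elementary and needs no Morava Leray--Hirsch for $G/T$. Your route could perhaps be made to work, but as written it relies on an unverified structural input, whereas the paper's substitute is a one-line spectral sequence comparison.

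The final reassembly step (truncate in $n$, collapse Atiyah--Hirzebruch, pass to $\mathbb{Z}/p^r$, then to $\mathbb{Z}[\ell_G^{-1}]$ via universal coefficients and Leray--Hirsch) is sound and matches the paper's.
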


We start with a torus and Morava theory. As in Theorem~\ref{thm:spectorus}, choose a generic 
one-parameter subgroup $S^1 \subset T$. The associated moment map is a Morse-Bott function 
with critical sets the $T$-fixed points  $F_\beta$, with Morse indices $\lambda_\beta$. 
The normal bundles to the Morse strata meet the conditions of the Atiyah-Bott Lemma~\ref{lem:AtiyahBott1} 
\cite{Atiyah}, so the stratification is equivariantly perfect for $K_{p^r}(n)$. We note first a 
simplified version of Theorem~\ref{thm:spectorus}.

\begin{prop} \label{lem:ABmomentcomponent}
There is a (non-canonical) isomorphism of $K_p(n)^*(BT)$ modules 
\begin{align} \label{eq:freenessofMorava} 
K_p(n)^*(M_T) \cong \bigoplus\nolimits_\beta K_p(n)^{*+\lambda_\beta}(F_\beta)\otimes_{K_p(n)^{*}} K_p(n)^*(BT).  
\end{align} 
\end{prop}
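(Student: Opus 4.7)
The plan is to run a Morse-theoretic induction over the stratification of $M$, using the equivariant perfection for $K_p(n)$ already established in Theorem~\ref{thm:MoravaKirwan} to split the Gysin sequences, and then to exploit that $K_p(n)^*$ is a graded \emph{field} to promote the short exact sequences into module splittings. Attaching the unstable strata $S_\beta$ in the order prescribed by the moment map of the generic $S^1\subset T$, equivariant perfection breaks the long exact sequence
\[
\cdots \to K_p(n)^*_T(D_\beta,\partial D_\beta) \to K_p(n)^*_T(M_{\le\beta}) \to K_p(n)^*_T(M_{<\beta}) \to \cdots
\]
into short exact sequences at each stage. First I would use the Thom isomorphism, the $T$-equivariant retraction $S_\beta\to F_\beta$ (on which $T$ acts trivially), and the Künneth formula for $K_p(n)$, to identify the leftmost term with $K_p(n)^{*+\lambda_\beta}(F_\beta)\otimes_{K_p(n)^*} K_p(n)^*(BT)$.

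The key observation is that this Thom term is automatically \emph{free} over $K_p(n)^*(BT)$: every graded module over the graded field $K_p(n)^*$ is free, and tensoring with $K_p(n)^*(BT)$ preserves this property. The induction then proceeds cleanly: assuming $K_p(n)^*_T(M_{<\beta})$ is already free over $K_p(n)^*(BT)$, projectivity yields a (non-canonical) splitting of the short exact sequence at the $\beta$-stratum, so $K_p(n)^*_T(M_{\le\beta})$ inherits freeness with a basis collecting the contributions of all strata $\le\beta$. Iterating through the full stratification produces the asserted isomorphism.

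The main obstacle is not the homological algebra, which is essentially trivial once one recognizes $K_p(n)^*$ as a graded field, but rather the identification of each Thom contribution as an external tensor product over $K_p(n)^*(BT)$. This step rests on two features absent from ordinary cohomology: the $T$-equivariant retraction of Kirwan's strata onto the full $T$-fixed locus (a feature of abelian group actions), and the Künneth formula for Morava $K$-theory. With both ingredients in place, the proof is essentially formal.
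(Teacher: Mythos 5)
Your approach is the same as the paper's: order the $T$-fixed components by the $\mu$-value of a generic $S^1\subset T$, use the equivariant perfection of the resulting Morse stratification to get short exact Gysin sequences, identify each Thom term with $K_p(n)^{*+\lambda_\beta}(F_\beta)\otimes_{K_p(n)^*} K_p(n)^*(BT)$ via the (trivial $T$-action on $F_\beta$ plus the) K\"unneth formula, and then split the sequences inductively using freeness over $K_p(n)^*(BT)$ --- which, as you correctly stress, follows from $K_p(n)^*$ being a graded field. This is precisely the paper's argument, with the freeness and splitting steps spelled out more explicitly than in the paper's terse version.

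One small citation inaccuracy: you attribute the equivariant perfection to Theorem~\ref{thm:MoravaKirwan}, but that theorem is about the stratification by $\|\mu\|^2$. The stratification relevant here is the Morse--Bott stratification of the moment map for the generic $S^1\subset T$, whose perfection the paper derives directly from the Atiyah--Bott Lemma~\ref{lem:AtiyahBott1} (the normal bundles to these strata carry only nonzero $S^1$-weights by Atiyah's convexity work). The underlying mechanism is the same lemma that powers Theorem~\ref{thm:MoravaKirwan}, so the argument is not affected, but you should invoke Lemma~\ref{lem:AtiyahBott1} rather than Theorem~\ref{thm:MoravaKirwan} here.
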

\begin{proof}  
Ordering of the fixed point 
sets by $\mu$-value,  we can describe $K_{p^r}(n)^*(M_T)$ via a sequence of extensions of the form: 
\begin{align} 
\label{eq:shortexactAB} 0 \to K_{p}(n)^{*+\lambda_\beta}((F_\beta)_T)\to 
	K_{p}(n)^*((M_{\leq \beta})_T)\to K_{p}(n)^*((M_{<\beta})_T)\to 0.  
\end{align}  
Each module  $K_{p}(n)^*((F_\beta)_T)$ is free over $K_p(n)^*(BT)$, so  we can inductively split the 
short exact sequences \eqref{eq:shortexactAB}  to obtain the Proposition.   
\end{proof} 

To prove Theorem~\ref{thm:cohomology}, consider first the Atiyah-Hirzebruch-Leray spectral sequence: 
\begin{align} 
\label{eq:degenerationm1} E_2^{s,t}= H^s(BT, K_{p^r}(n)^t(M)) \Rightarrow K_{p^r}(n)^{s+t}(M_T), 
\end{align}
which converges (\S\ref{sec:facts}.3).

\begin{prop} \label{prop:prMoravacircle} 
The  spectral sequence \eqref{eq:degenerationm1} collapses at $E_2$.    
\end{prop}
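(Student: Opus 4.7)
My plan is to match the Poincar\'e series of the $E_2$-page and the abutment, both of which are computable from the equivariant and non-equivariant Morse decompositions.

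First, extending Proposition~\ref{lem:ABmomentcomponent} from $K_p(n)$ to $K_{p^r}(n)$ (the Atiyah-Bott filtration argument applies verbatim, since the relevant lemma is already established for $K_{p^r}(n)$) and using K\"unneth for the trivial fibration $(F_\beta)_T = F_\beta \times BT$ (valid because $K_{p^r}(n)^*(BT)$ is a free $K_{p^r}(n)^*$-module concentrated in even degrees), I would obtain the splitting
\[
K_{p^r}(n)^*(M_T) \cong \bigoplus\nolimits_\beta K_{p^r}(n)^{*+\lambda_\beta}(F_\beta) \otimes_{K_{p^r}(n)^*} K_{p^r}(n)^*(BT).
\]
This exhibits the abutment as a free $K_{p^r}(n)^*(BT)$-module of a known Poincar\'e series.

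Second, I would identify the $E_2$-page. Since the AHSS for $BT$ alone collapses by even-degree considerations, $K_{p^r}(n)^*(BT) \cong H^*(BT;\mathbb{Z}) \otimes K_{p^r}(n)^*$, and the universal coefficient theorem gives
\[
E_2 = H^*(BT; K_{p^r}(n)^*(M)) \cong K_{p^r}(n)^*(BT) \otimes_{K_{p^r}(n)^*} K_{p^r}(n)^*(M).
\]
Both $E_2$ and the abutment are thus free $K_{p^r}(n)^*(BT)$-modules, differing only in whether the $K_{p^r}(n)^*$-level factor is $K_{p^r}(n)^*(M)$ or $\bigoplus_\beta K_{p^r}(n)^{*+\lambda_\beta}(F_\beta)$. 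To equate these, I would combine the subquotient inequality $\mathrm{PS}(E_\infty) \leq \mathrm{PS}(E_2)$ with the identity $\mathrm{PS}(E_\infty) = \mathrm{PS}(\text{abutment})$ (the associated graded of a free $K_{p^r}(n)^*(BT)$-module under the skeletal filtration, which is itself the polynomial filtration on $K_{p^r}(n)^*(BT)$, preserves the Poincar\'e series), and the a priori upper bound $\mathrm{PS}(K_{p^r}(n)^*(M)) \leq \sum_\beta t^{\lambda_\beta} \mathrm{PS}(K_{p^r}(n)^*(F_\beta))$ supplied by the non-equivariant Morse spectral sequence associated to the filtration $M_{\leq\beta}$, which exists with no perfection hypothesis.

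Chaining these bounds forces $\mathrm{PS}(E_2) = \mathrm{PS}(E_\infty)$, so every differential must vanish at $E_2$. The hard part will be the case $r > 1$, where $K_{p^r}(n)^*$ is not a graded field and the Poincar\'e-series manipulations require well-defined ranks for the modules involved; I would address this either by first establishing the result for $K_p(n)$ (a graded field, where the rank comparison is unambiguous) and bootstrapping to $K_{p^r}(n)$ via the Bockstein-type cofiber sequence relating the two, or by inductively verifying that $K_{p^r}(n)^*(F_\beta)$ is free over $K_{p^r}(n)^*$ by applying the same Morse-theoretic structure to each fixed component.
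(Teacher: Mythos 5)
Your approach follows the same general strategy as the paper -- Morse inequalities for $K_{p^r}(n)^*$ combined with injectivity of Euler-class multiplication via the normal bundle to the fixed locus -- but the implementation has two genuine gaps, both of which the paper circumvents by working with finite-dimensional truncations rather than Poincar\'e series over the full $BT$.

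First, you invoke an extension of Proposition~\ref{lem:ABmomentcomponent} from $K_p(n)$ to $K_{p^r}(n)$, asserting it "applies verbatim." It does not: that proposition's proof splits the short exact sequences \eqref{eq:shortexactAB} because $K_p(n)^*((F_\beta)_T)$ is a \emph{free} $K_p(n)^*(BT)$-module, which in turn holds because $K_p(n)^*$ is a graded field, so $K_p(n)^*(F_\beta)$ is automatically free. For $r>1$, the coefficient ring $\mathbb{Z}/p^r\mathbb{Z}[v_n^{\pm}]$ is not a graded field, and $K_{p^r}(n)^*(F_\beta)$ need not be free over $K_{p^r}(n)^*$. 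You flag this yourself at the end, but the suggested fixes (a Bockstein bootstrap, or an inductive freeness claim for the $F_\beta$) are not carried out, and the second one is in fact false in general. Since the intended application in Theorem~\ref{thm:cohomology} requires arbitrary $r$ (to recover $\mathbb{Z}$-coefficients via $p^r$-torsion), this is not a side case.

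Second, the step asserting $\mathrm{PS}(E_\infty) = \mathrm{PS}(\text{abutment})$ via the claim that "the associated graded of a free $K_{p^r}(n)^*(BT)$-module under the skeletal filtration preserves the Poincar\'e series" is not automatic, and this is precisely the delicate point. The AHSS filtration on $K_{p^r}(n)^*(M_T)$ is defined by vanishing on skeleta of the base, and it is not given by a choice of $K_{p^r}(n)^*(BT)$-basis; a module-compatible filtration on a free $R$-module can have associated graded of strictly smaller Poincar\'e series than the expected $\mathrm{gr}\,R$-free module. Establishing that this collapse does not occur is exactly what the Euler-class structure is needed for, and one cannot get it for free by citing Proposition~\ref{lem:ABmomentcomponent}.

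The paper's proof sidesteps both issues. It never invokes Proposition~\ref{lem:ABmomentcomponent} (which is only a warm-up for $K_p(n)$). Instead, for $T=S^1$ it restricts the Borel fibration to $\mathbb{CP}^m\subset BS^1$, making every group $K_{p^r}(n)^*(M_{S^1,m})$ genuinely finite (as a periodic abelian group), so cardinality comparison replaces Poincar\'e-series bookkeeping and no freeness over $K_{p^r}(n)^*$ is needed. The explicit form \eqref{eq:restrictioneuler} of the Euler class yields a bound $k$ on the kernel of $\iota_m^*\iota_{m*}$ that is \emph{independent of $m$}, and the resulting two-sided cardinality estimate \eqref{eq:Leraycardinalitybound}--\eqref{eq:Eulerinjectivitybound} forces $|K_{p^r}(n)^*(M)|=|K_{p^r}(n)^*(F)|$. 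Finally, a hypothetical non-zero differential, propagated by powers of the hyperplane class over $\mathbb{CP}^m$, would make the defect grow unboundedly in $m$, contradicting the uniform $k$-bound. The general torus then follows by induction on rank, as you would also do. Your outline is a reasonable sketch, but the $r>1$ freeness issue and the filtration-compatibility issue are real holes, and the paper's truncation-plus-counting argument is specifically designed to avoid them.
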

\noindent
For a finitely generated $K_{p^r}(n)^*(pt)$ module $Q$, let $|Q|$ denote its cardinality (treating 
$K_{p^r}(n)$ as a periodic theory).  As $M$ is built from suspensions of the $F_\beta$, we have the 
`Morse inequalities'
\begin{align} \label{eq:Morseinequalities} 
|K_{p^r}(n)^*(F)| \geq  |K_{p^r}(n)^*(M)| 
\end{align} 
with $F:= \coprod F_\beta$. Calling $\iota: F \hookrightarrow M$ the  inclusion, we have maps 
\begin{align} 
\iota_*: K_{p^r}(n)^*(F_T) \to K_{p^r}(n)^*(M_T),  \quad \iota^*: K_{p^r}(n)^*(M_{T}) \to K_{p^r}(n)^*(F_{T}).  
\end{align} 
The composition $ \iota^* \iota_*$ is multiplication by the Morava Euler class of the normal bundle $\nu$ 
to the fixed point set $F$. This implies that $\iota_*$ is injective, and already suggests that we 
must have equality in \eqref{eq:Morseinequalities}, as we shall confirm in 
the proof.

\begin{proof}[Proof of Proposition~\ref{prop:prMoravacircle}]
Start with the special case $T=S^1$ and let $F_{S^1,m}$ and $M_{S^1,m}$ denote the restrictions to 
$\mathbb{CP}^m$ of the respective Borel fibrations to $BS^1=\mathbb{CP}^\infty$. Then,  
\begin{align} 
\label{eq:Leraycardinalitybound} 
|K_{p^r}(n)^*(M_{S^1,m})| \leq |K_{p^r}(n)^*(M)|^{m+1}. 
\end{align} 
Write $K_{p^r}(n)^*(F_{S^1,m})=\bigoplus_{j=0}^m K_{p^r}(n)^*(F)u^j$,  where 
$u \in K_{p^r}(n)^2(\mathbb{C}P^m)$ is the generator.  From the form \eqref{eq:restrictioneuler} of the 
Euler class, we extract an integer $k \geq 1$, depending on the $p$-exponents in the weights of 
the action of $S^1$ on $\nu$, but \emph{independent of~$m$}, such that 
\begin{align} 
\label{eq:kernelbound} 
\operatorname{ker}(\iota_m^* \iota_{m*}) \bigcap 
	\bigoplus_{j=0}^{m-k} K_{p^r}(n)^*(F)u^j = 0
\end{align}  
for the restricted $\iota_m$ over $\mathbb{CP}^m$; this is because $xu^k$ is a leading term of the Euler 
class in the $\mathfrak{m}$-filtration. The projection of 
$\operatorname{ker}(\iota_m^* \iota_{m*})$ 
onto the complementary space spanned by powers $u^j$ with 
$m-k< j\le m$ is injective. It follows that
\begin{align}
\label{eq:Eulerinjectivitybound} |K_{p^r}(n)^*(F)|^{m+1}-|K_{p^r}(n)^*(F)|^{k+1} \leq |K_{p^r}(n)^*(M_{S^1,m})|.  
\end{align} 
Combining \eqref{eq:Leraycardinalitybound} with~\eqref{eq:Eulerinjectivitybound} for large $m$ 
shows that we must have equality in \eqref{eq:Morseinequalities}. 

Suppose now that we find a non-trivial differential on some page of the 
sequence~\eqref{eq:degenerationm1}.  Multiplying it  by  powers of the hyperplane class in
$H^2(\mathbb{CP}^m)$ would show that the difference  
\[
|K_{p^r}(n)^*(M)|^{m+1}-|K_{p^r}(n)^*(M_{S^1,m})|
\] 
grows unboundedly with $m$,  contradicting the $k$-bound in \eqref{eq:Eulerinjectivitybound}. 
This concludes the proof for $T=S^1$.  

For a general $T$, we proceed by induction on the rank, splitting  $T=S^1 \times H$. 
Choose  finite-dimensional approximations 
$BT_m =  \mathbb{CP}^m\times BH_m\cong \mathbb{CP}^m \times \cdots \times \mathbb{CP}^m$, 
with corresponding $M_{T,m}$. We claim  the $E_2$ degeneration of all Leray sequences  
\begin{align} \label{eq:moravaprdegm} 
E_2^{s,t}= H^s(BT_m, K_{p^r}(n)^t(M)) \Rightarrow K_{p^r}(n)^{s+t}(M_{T,m}).
\end{align} 
Restricting from $BH$ to $BH_m$, our induction hypothesis implies the collapse of \eqref{eq:moravaprdegm} with $H$ replacing $T$.  Since $M_{H,m}$ is a Hamiltonian $S^1$-space, 
we conclude that \eqref{eq:moravaprdegm} also degenerates with $T$.  
From this,  we deduce the surjectivity of the restriction-to-fiber map for all 
$m$, 
\begin{align} \label{eq:degenerationmorava} 
K_{p^r}(n)^*(M_{T,m}) \to K_{p^r}(n)^*(M)
\end{align} 
ensuring the absence of differentials and collapse in~\eqref{eq:degenerationm1}. 
\end{proof}

\begin{proof} [Proof of Theorem~\ref{thm:cohomology}.]
Start with the case when $G$ is a torus $T$. Fix $p, r$, and choose a cut-off $m>\frac{1}{2}\dim M$, 
so that the restriction 
\begin{align}\label{eq:splittingEGn} 
H^d(M_T;\mathbb{Z}/p^r\mathbb{Z}) \to  H^d(M_{T,m};\mathbb{Z}/p^r\mathbb{Z}) 
\end{align} 
is an isomorphism in  degrees $d \leq \dim(M)$ (notation as in the previous proof). 
Choose a height $n$ so that $|v_n|>\dim(M_{T,m})$. From \eqref{eq:degenerationmorava}, 
we conclude (as in the proof of Theorem~\ref{thm:main}) the surjectivity of the map 
\begin{align} \label{eq:degenerationm} 
H^*(M_{T,m};\mathbb{Z}/p^r\mathbb{Z}) \to H^*(M;\mathbb{Z}/p^r\mathbb{Z}), 
\end{align} 
and thus of the restriction $H^*(M_{T}; \mathbb{Z}/p^r\mathbb{Z}) \to H^*(M;\mathbb{Z}/p^r\mathbb{Z})$.  
It follows again that 
\[
H^d(M_{T}; \mathbb{Z}) \to H^d(M;\mathbb{Z}) 
\] 
is onto (and  split).   The theorem for $T$ now follows from the Leray-Hirsch theorem.  

For a general $G$, let $p$ be a prime not in  $\ell_G$ and let $R=\mathbb{Z}/p^r\mathbb{Z}$. 
The Leray spectral sequences $\lbrace E_*^{s,t}(M_T) \rbrace$ and $\lbrace E_*^{s,t}(M_G) \rbrace$ 
computing $H^*(M_T;R)$ and $H^*(M_G;R)$ have $E_2$ pages 
\[
E_2(M_G) = H^*(M;R) \otimes_{R} H^*(BG; R), \quad E_2(M_T) = H^*(M; R) 
\otimes_R H^*(BT, R).
\]
The map $\pi:BT\to BG$ induces a morphism $\pi^*$ of spectral sequences, which is injective on $E_2$ pages, 
since  $G$ has no $p$-torsion. There are thus no differentials on $E_2(M_G)$. Repeating this page by page 
shows that no differentials can occur, and $E^{s,t}_*(M_G)$ degenerates at the second page. 

The restrictions $H^*(M_G;R) \to H^*(M;R)$ are therefore surjective for all such $p,r$. 
Thus, the integral restriction  
\[ 
H^*(M_{G}; \mathbb{Z}[1/\ell_G]) \to H^*(M;\mathbb{Z}[1/\ell_G]) 
\]
is split-surjective, and the theorem follows, again from Leray-Hirsch.  
\end{proof}

\section{Spectral formality} 
\label{sec:specformal}

We start with a quick refresher on (naive) equivariance for spectra.

\subsection{Homotopy $G$-actions on spectra}
A \emph{homotopy $G$-action}  ($hG$-action for short) on a spectrum $F$  is an $A_\infty$-action 
of $G$ by maps of spectra. Essentially by definition, the bar construction converts this structure 
to a local system $\mathcal{F}$ of spectra with fiber $F$ over the classifying space $BG$ (understood as the 
geometric realization of the simplicial $BG$; see \cite{segal} for background). An equivalent 
but more abstract datum is the promotion of $F$ to a left module over the suspension spectrum 
$\Sigma^\infty G_+$, which the multiplication in $G$ turns into an associative ring spectrum 
(the group ring of $G$ over the stable sphere). Of course, strict $G$-actions define  homotopy 
actions. 

The \emph{homotopy fixed-point spectrum} $F^{hG}$ is the space of sections of $\mathcal{F}$ over $BG$. 
For the cohomology theory $X\mapsto \pi_*F(X)$ represented by $F$, 
the homotopy groups of the spectra 
\[
X\mapsto \left(F\wedge X_+\right)^{hG}, \quad 
	F\left(X\right)^{hG}
\]
give the naive equivariant $F$-homologies and cohomologies of $X$. When the $G$-action on $F$ is trivial, 
the latter spectrum agrees with $F(X_G)$. 

\subsection{Extensions.} 
A \emph{fibration sequence} $F' \rightarrowtail F \twoheadrightarrow F''$ of $hG$-spectra is 
a fiber-wise fibration sequence for the associated bundles over $BG$. Its relative delooping over $BG$, 
$\Sigma \mathcal{F}' \rightarrowtail \Sigma \mathcal{F} \twoheadrightarrow \Sigma \mathcal{F}''$, 
defines a fiber-wise action of $\mathcal{F}'' \cong \Omega\Sigma \mathcal{F}''$ on $\Sigma \mathcal{F}'$, 
in particular, a map $\mathcal{F}''\to \Sigma \mathcal{F}'$, after acting on the base-point section of 
$\ \mathcal{F}'$. As usual, its homotopy class determines the $hG$-fibration sequence up to homotopy, 
identifying it with the pull-back from the tautological $\mathcal{F}' \rightarrowtail * \twoheadrightarrow \Sigma \mathcal{F}'$: a homotopy of classifying maps leads to an equivalence of fibration sequences over $BG$. 
In particular, the sequence is split when the classifying map is null-homotopic.
In our case, when the $F$ and the fibration maps are $MU$-modules, the classifying map for the extension
is  $MU$-linear. 

\begin{rem}
Our base $BG$ is a finite type $CW$ complex, and the Atiyah-Hirzebruch sequences stabilize 
(\S\ref{sec:facts}.3); the classification of spectral extensions then follows by the methods 
of fiber-wise homotopy theory over compact bases, as in \cite{crabbjames}: we can check the 
statements cell by cell over the base. More general situations require care in setting up the 
fiber-wise stable homotopy category. 
\end{rem}

\subsection{Proofs.} We now prove Theorems~\ref{thm:specformal} and~\ref{thm:spectorus}; the second one  
will be settled first, leading to the first by a general splitting observation. The argument converts the 
Atiyah-Bott lemma into a vanishing of extension classes.

\begin{proof} [Proof of Theorem~\ref{thm:spectorus}]
As in the \S\ref{section:formality}, a generic circle $S^1\subset T$ will have the same fixed-points 
on $M$ as $T$; denote by 
$\{F_\beta\}_{\beta\in B}$ the connected components of the fixed locus. The moment map $\mu$, 
restricted to~$S^1$, defines a Morse-Bott function with critical loci~$F_\beta$ and Morse 
indices~$\lambda_\beta$. The~$\beta$ are  ordered by their $\mu$-values, with the lowest one 
representing the open Morse stratum. Let $M_{<\beta} \subset M_{\le \beta}$ denote the union 
of Morse strata below, respectively up to $\beta$; these are open in $M$, 
while~$S_\beta: = M_{\le \beta} \setminus M_{<\beta} \subset M_{\le \beta}$ is the stratum 
retracting to $F_\beta$. Its normal bundle~$N_\beta$ in~$M_{\le \beta}$ has an almost complex structure, 
carrying  positive $S^1$-eigenvalues only. 

We show that successively attaching the $S_\beta$ splits over $BT$, after smashing with~$MU$; 
the theorem follows by finite induction. The inclusion  $M_{<\beta} \subset M_{\le \beta}$ 
leads to a fibration sequence over $BT$, 
\begin{align}\label{eqn:BTextension}
MU\wedge M^+_{< \beta} \rightarrowtail MU\wedge M^+_{\le \beta} \:
\twoheadrightarrow \Sigma^{N_\beta} MU\wedge F^+_\beta
\end{align}
This extension over $BT$ is classified by an $MU$-linear map, up to homotopy,
\[
MU\wedge F^+_\beta \to \Sigma^{1-\lambda_\beta} MU\wedge M^+_{< \beta},
\]
(having used the Thom isomorphism). This map represents our extension class 
\[
\varepsilon_\beta\in MU^{1+\dim S_\beta}_{T,cpt}\left(F_\beta \times M_{< \beta}\right),
\] 
having used Poincar\'e duality, with the compactly supported cohomology, on $M_{\beta}$. 

Now, the fibration \eqref{eqn:BTextension} has a natural splitting when pulled back from the Thom space 
of $N_\beta\to S_\beta $ to~$M_{\le \beta}$. Restricting the pull-back extension class to $F_\beta$ 
gives the relation 
\[
e_T(N_\beta)\cdot \varepsilon_\beta = 0 \in MU^{1+\dim M}_{T, cpt}\left(F_{\beta} \times M_{< \beta}\right).
\]
By inductive assumption, $MU\wedge M^+_{< \beta}$ is a sum of shifted constant $MU$-modules over $BT$, and 
then so is its relative $MU$-dual over $BT$ defined by the compactly supported mapping space out of 
$M_{< \beta}$. The Atiyah-Bott Lemma~\ref{lem:AtiyahBott1}
then shows the vanishing of $\varepsilon_\beta$ and completes the induction.
\end{proof}

Passing to a general connected $G$ relies on the following well-known proposition. 
Choose a maximal torus $T\subset G$ and call $\pi:BT\to BG$ the fiber bundle, with fiber $G/T$. 
\begin{prop}
With $\mathbb{Z}[\ell_G^{-1}]$ coefficients, the restriction $H^*(BT) \to H^*(G/T)$ is surjective. 
\end{prop}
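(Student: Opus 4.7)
The plan is to deduce this from Borel's classical description of the cohomology of flag varieties with the torsion primes of $BG$ inverted. Set $R := \mathbb{Z}[\ell_G^{-1}]$, and consider the Serre spectral sequence of the fibration $G/T \hookrightarrow BT \xrightarrow{\pi} BG$,
\[
E_2^{p,q} = H^p(BG; H^q(G/T; R)) \Longrightarrow H^{p+q}(BT; R);
\]
the restriction in question is the edge homomorphism to the fiber, and its surjectivity is equivalent to collapse at $E_2$.

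Since $G$ is connected, $BG$ is simply connected, so there is no monodromy on $H^*(G/T; R)$. By Borel's theorem, the fact that we have inverted the torsion primes of $BG$ ensures that both $H^*(BG; R)$ and $H^*(G/T; R)$ are free $R$-modules concentrated in even degrees, and $H^*(BG; R)$ is a polynomial ring, identified via $\pi^*$ with the Weyl-invariant subring of $H^*(BT; R)$. The $E_2$ page thus factors as $H^*(BG; R) \otimes_R H^*(G/T; R)$.

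Degeneration is then a Poincaré series calculation. Writing $d_1, \ldots, d_r$ for the degrees of a homogeneous generating set of Weyl invariants (with $r = \operatorname{rank} T$), Borel's theorem gives
\[
P_{H^*(BG; R)}(t) = \prod_{i=1}^r \frac{1}{1 - t^{2d_i}}, \qquad
P_{H^*(G/T; R)}(t) = \prod_{i=1}^r \frac{1 - t^{2d_i}}{1 - t^2}.
\]
Their product equals $(1 - t^2)^{-r} = P_{H^*(BT; R)}(t)$, so the total graded $R$-rank of $E_2$ matches that of the abutment. No differentials can occur, $E_2 = E_\infty$, and the restriction $H^*(BT; R) \to H^*(G/T; R)$ is surjective. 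The main point --- and the only reason torsion primes of $BG$ must be inverted --- is Borel's Poincaré series computation over $R$ rather than just rationally; everything else is a routine spectral sequence argument.
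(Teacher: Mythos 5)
Your argument is correct and uses the same fibration $G/T \rightarrowtail BT \twoheadrightarrow BG$ and the same Serre/Leray spectral sequence as the paper, so the overall strategy coincides; the difference lies purely in how collapse at $E_2$ is justified. You establish it via Borel's Poincar\'e-series identity, matching the total graded $R$-rank of $E_2 \cong H^*(BG;R)\otimes_R H^*(G/T;R)$ to that of $H^*(BT;R)$, which incidentally also requires knowing that both tensor factors are free $R$-modules. The paper's proof is a one-liner: with $\ell_G$ inverted, $H^*(BG;R)$ and $H^*(G/T;R)$ are concentrated in even degrees, so the $E_2$ page sits entirely in even bidegree and every differential $d_r\colon E_r^{p,q}\to E_r^{p+r,q-r+1}$ must vanish for parity reasons (it would need $r$ to be simultaneously even and odd). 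Both arguments appeal to the same Borel-type input about torsion primes; the parity observation is shorter and sidesteps freeness and rank counting, while your version makes the precise Poincar\'e-series bookkeeping explicit, which some readers may find more transparent.
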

\begin{proof}
The Leray sequence for $G/T \rightarrowtail BT \twoheadrightarrow BG$ collapses at $E_2$, being evenly graded. 
\end{proof}
\noindent
The argument applies equally to $MU$. The push-forward $\pi_*MU[\ell_G^{-1}]$ 
of the constant $MU[\ell_G^{-1}]$-bundle over $BT$ is then a direct sum of constant $MU[\ell_G^{-1}]$-bundles over $BG$, 
with basis labeled by a basis of (equivariant extensions of) classes in $MU^*(G/T)$. The canonical map of bundles
\[
MU[\ell_G^{-1}] \to \pi_*MU[\ell_G^{-1}] = \pi_*\pi^* MU[\ell_G^{-1}], 
\] 
is split, up to a unit, by the fiber-wise integration map along $\pi$ against 
the (equivariant extension of the) volume form on $G/T$,
\begin{equation}\label{eq:splitG}
S: \pi_*MU[\ell_G^{-1}] \to MU[\ell_G^{-1}];
\end{equation}
this is because the fiber-wise integration map gives a splitting. This $S$ functions 
algebraically as a section of $\pi$, and will allow us to `restrict' the trivialization 
of the $T$-action to the $G$-action.

\begin{proof}[Proof of Theorem~\ref{thm:specformal}] 
Let $\mathcal{M}$ be the $MU[\ell_G^{-1}]$-linear bundle over $BG$ 
associated to the $G$-action on 
$MU[\ell_G^{-1}] \wedge M$. The pull-back $\pi^*\mathcal{M}$ to 
$BT$ comes from the restricted action. By 
Theorem~\ref{thm:spectorus}, there is an isomorphism 
$MU[\ell_G^{-1}] \wedge M \xrightarrow{\ \sim\ } \pi^*\mathcal{M}$ 
from the constant bundle over $BT$, pushing down to a map 
\[
MU[\ell_G^{-1}] \wedge M \xrightarrow{\ \ } \pi_*\pi^*\mathcal{M} \cong \mathcal{M}\otimes_{MU} \pi_*MU[\ell_G^{-1}].
\] 
Continuing this map with the splitting $S: \pi_*\pi^*\mathcal{M} \to \mathcal{M}$ from \eqref{eq:splitG} gives a 
fiberwise isomorphism $MU[\ell_G^{-1}] \wedge M \xrightarrow{\ \sim\ }\mathcal{M}$, 
trivializing the $G$-action on $MU[\ell_G^{-1}] \wedge M$, as claimed.
\end{proof}

\section{Spectral Kirwan splitting}
We now prove Theorem~\ref{thm:specKirwan}. Again, we use the Atiyah-Bott criterion 
to check the vanishing of successive extension classes upon attaching Morse strata 
$S_\beta$ of $\|\mu\|^2$ to the stable stratum~$S_0$. 

In the notation of \S3, choose an ordering of the critical values so that, for each $\beta$, 
the union 
\[
M_{\le\beta}:= \bigcup\nolimits_{\alpha\le\beta} S_\alpha
\]
is open in $M$; in turn, $S_{\beta}$ is closed in $M_{\le\beta}$. We denote by $\landw$ any 
of  $K_{p^r}(n)$ or Landweber-exact spectrum, such as~$\En, BP$ or $MU_{(p)}$, in which the product $\ell$ 
of orders of stabilizers in $\mu^{-1}(0)$ is invertible. This allows us to treat the quotient orbifold 
$Q$ of $\mu^{-1}(0)$ by $G$ as a compact, almost complex manifold. 

The  inclusion $M_0\subset M_{\le\beta}$ leads to a fibration sequence of $hG$-spectra
\begin{equation}\label{eqn:fibration}
\landw\wedge M_0^+ \rightarrowtail \landw\wedge M_{\le\beta}^+ \twoheadrightarrow 
\landw\wedge \left(M_{\le\beta},M_0\right);
\end{equation}
taking homotopy quotients gives a fibration matching \eqref{eqn:fibration}, 
classified by a map
\begin{equation}\label{eqn:classifying}
\left(\landw\wedge\left(M_{\le\beta},M_0\right)\right)_{hG} \to 
\Sigma \left(\landw\wedge M_0^+\right)_{hG}.
\end{equation}   
The retraction of $M_0$ to $\mu^{-1}(0)$ determines a bundle with fiber $T^*G$ over $Q$; 
this gives an equivalence
\[
\left(\landw\wedge M^+_0\right)_{hG} 
	\cong \landw\wedge Q_+.
\]  
The classifying map in \eqref{eqn:classifying} is $\landw$-linear; using Poincar\'e duality 
on $Q$, it reduces (up to homotopy) to an extension class
\[
\varepsilon_\beta\in \landw^{1-\dim Q}_{G}\left(Q\times(M_{\le\beta},M_0)\right).
\] 
\begin{proof}[Proof of Theorem~\ref{thm:specKirwan}]
The  inclusions $M_0\subset M_{<\beta} \subset M_{\le\beta}$ 
lead to a long exact sequence for the triple 
\[
\dots \longrightarrow  \landw^*_{G}\left(Q\times (M_{\le\beta},M_{<\beta})\right) 
\xrightarrow{\ a\ }  \landw^*_{G}\left(Q\times (M_{\le\beta},M_0)\right)
\xrightarrow{\ b\ }  \landw^*_{G}\left(Q\times (M_{<\beta},M_0)\right) 
\longrightarrow \dots 
\]
The Thom isomorphism identifies the leftmost space with $\landw^{*-\dim N_\beta}_{G}(Q\times S_\beta)$. Now, $b(\varepsilon_\beta) = \varepsilon_{<\beta}$, which we inductively assume to vanish. Then, 
$\varepsilon_\beta = a(\eta)$, with some 
\[
\eta  \in \landw^{*-\dim N_\beta}_{G}\left(Q\times (M_{\le\beta},M_{<\beta})\right) .
\] 
Now, $\varepsilon_\beta$ is null when mapped from the pair $(M_{\le\beta}, M_0)$ 
to $M_{\le\beta}$, as the fibration in~\eqref{eqn:fibration} lifts trivially to the total space. 
Restricting back from $M_{\le\beta}$ to $S_\beta$ leads to 
\[
e_G(N_\beta)\cdot \eta = 0 \in \landw_G^*(Q\times S_\beta), 
\]
forcing $\varepsilon_\beta=0$, for the now familiar reason. Going all the way to the top shows the 
$\landw$-linear splitting of the fibration
\[
\landw\wedge Q_+ \rightarrowtail 
\left(\landw\wedge M_+\right)_{hG} \twoheadrightarrow \left(\landw\wedge (M,M_0)\right)_{hG},
\]
detaching, over $\landw$, the quotient $Q$ from the homotopy quotient $M_G$. 
\end{proof}

\appendix 

\section{Effective Atiyah-Bott lemma for $BP$}

We prove here an explicit version of Proposition~\ref {prop:kerstability}. 
Recall from \S\ref {sec:perfect} 
\begin{itemize}\itemsep0ex
\item $N, V\to N$ with action of 
$L=(S^1\times H)/\boldsymbol{\mu}_l$;
\item the fixed prime $p$;
\item $l=p^sl'$, with $p\nmid l'$;
\item the weights $x_wp^w$ of the central $S^1\subset L$ 
action on~$V$, with $p\nmid x_w$; 
\item the generators $u$ of $BP^2(BS^1)$ and $\omega$ of $BP^2(BS^1/B\boldsymbol{\mu}_l)$.
\end{itemize}
Up to multiplicative units in $BP^*$,
\[
[x_wp^w]\gdot_{BP} u = [p^w]\gdot_{BP} u, \qquad 
	[l] \gdot_{BP} u = [p^s]\gdot_{BP} u.
\]
Writing, in the height $h$ quotient ring $BP^*/I_h[[u]]$, 
\[
[p]\gdot_{BP} u = u^{p^h}\cdot(v_h + v_{h+1}u^{p^{h+1}-p^{h}} + \dots) 
	= u^{p^h}\cdot \pi_h(u),
\]
define $\pi_h[j]: = \pi_h([p^{j}]\gdot_{BP} u)$ (so $\pi_h(u) =\pi_h[0]$), and continue to
\begin{equation}\label{eq:upowers}
\begin{split}
[p^2] \gdot_{BP} u &= \left(u^{p^h}\cdot \pi_h[0]\right)^{p^h} 	\cdot \pi_h\left([p]\gdot_{BP}u\right) = u^{p^{2h}}\cdot 
	\pi_h[0]^{p^h} \cdot \pi_h[0], \\
[p^a] \gdot_{BP} u &= u^{p^{ah}}\cdot \pi_h[0]^{p^{(a-1)h}} \cdot
	\dots \cdot \pi_h[a-1].
\end{split}
\end{equation}

For each skeleton $(N_H)_{< 2q}$, Landweber \cite{Landweber} proves the existence of a $BP^*$-linear filtration 
\[
0=B_0\subset B_1 \subset \dots \subset B_t = 
	BP^*\left((N_H)_{< 2q}\right)
\]
with slices $B_{j+1}/B_j \cong BP^*/I_{n_j}$, for various heights\footnote{From \cite{jw2}, we can bound 
$n_j \le \lceil \log_pq \rceil$; so $k$ is roughtly bounded by $t\sum_w q^w$.} $n_j$.
Let 
\[
B =\sum\nolimits_{w,j} p^{n_jw}.
\]

\begin{prop}
\label{prop:effective}
For each $A$, the kernel of Euler class multiplication on truncated cohomology
\[
\ker e_{S^1\times H}(V)\cdot: BP^*\left((N_H)_{< 2q}
	\times BS^1\right) \to \left.BP^{*+\dim V} 
	\left((N_H)_{< 2q}\times BS^1\right) \right/
	(u)^{A + qB} 
\] 
vanishes mod~$u^A$.
\end{prop}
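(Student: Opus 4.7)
The plan is to adapt the leading-$u$-order argument used to prove Lemma~\ref{lem:AtiyahBott1} for the Morava theories, but to execute it one Landweber slice at a time, tracking $u$-degree shifts quantitatively so as to obtain the explicit bound $qB$.

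First, I would set up the product description: by the strong Atiyah--Hirzebruch convergence of \S\ref{sec:facts}.3 together with the evenness and freeness of $BP^*(BS^1) = BP^*[[u]]$, one obtains the identification $BP^*((N_H)_{<2q}\times BS^1) \cong BP^*((N_H)_{<2q})[[u]]$, and the Landweber filtration $0 = B_0\subset \dots\subset B_t = BP^*((N_H)_{<2q})$ extends termwise to $B_j[[u]]$, with slices $BP^*/I_{n_j}[[u]]$. Decomposing $V = \bigoplus_w V_w$ into central-$S^1$-weight summands (each $V_w$ an $H$-bundle of rank $r_w$ carrying weight $x_w p^w$) and Chern-rooting each $V_w$ over $N_H$, the identities in \eqref{eq:upowers} imply that, modulo $I_{n_j}$, every line factor $[x_w p^w]\gdot_{BP} u \gplus_{BP} \rho$ equals $u^{p^{n_j w}}$ times a nonzero product of the $\pi_{n_j}[\bullet]$'s, whose leading term (as a $u$-series) is a power of $v_{n_j}$; since $v_{n_j}$ is a non-zero-divisor in the polynomial ring $BP^*/I_{n_j}$, that leading coefficient is a non-zero-divisor in $(BP^*/I_{n_j})[[u]]$. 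Multiplying over weights and Chern roots, the leading $u$-order of the slice-projection of $e_{S^1\times H}(V)$ is $\kappa_j = \sum_w r_w p^{n_j w}$, with leading coefficient a non-zero-divisor.

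Next, I would induct on the slice index. Suppose $\nu \in B_j[[u]]$ and $e(V)\cdot \nu \equiv 0 \bmod u^{A+qB}$. Projecting to the top slice, if the projection $\bar\nu$ is nonzero modulo $u^A$, the lowest nonzero coefficient $\bar\nu_{j_0} u^{j_0}$ with $j_0 < A$ produces the lowest nonzero contribution of $e(V)\bar\nu$, namely (non-zero-divisor)$\cdot\bar\nu_{j_0}\cdot u^{j_0+\kappa_j}$. Because $j_0 + \kappa_j < A + qB$, this must vanish, forcing $\bar\nu_{j_0}=0$ by the non-zero-divisor property, a contradiction. Hence $\nu$ descends to $B_{j-1}[[u]]$ modulo $u^A$, and the induction closes at $B_0 = 0$.

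The main obstacle is that the Landweber filtration $B_\bullet$ is merely $BP^*$-linear, hence need not be preserved under multiplication by elements of $BP^*(N_H)\setminus BP^*$ — in particular by the non-leading summands of $e(V)$ that carry Chern-class corrections from the $H$-bundle structure of $V$. To handle this, I would replace, at the relevant stage, the Landweber filtration by the $I_\bullet$-adic filtration $M\supset I_1 M\supset \dots \supset I_N M = 0$ on $M := BP^*((N_H)_{<2q})$, which is preserved by any $M$-linear operation (in particular by $e(V)\cdot$) and whose length $N$ is bounded by $\lceil\log_p q\rceil$ via Remark~\ref{effective}(1) and \cite{jw2}. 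On each $I_\bullet$-adic slice, viewed as a module over the graded-local polynomial ring $BP^*/I_{n+1}$, the same non-zero-divisor argument applies, and the per-slice $u$-shift is bounded by $\sum_w p^{nw}$. Summing these contributions over all slices (with the extra multiplicity $q$ absorbing the cross-terms between slices and the Chern-class corrections from $V|_{N_H}$) yields the total shift $qB$ in the statement.
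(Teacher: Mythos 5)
You have the right setup and — importantly — you correctly identify the real difficulty: the Landweber filtration $B_\bullet$ is only $BP^*$-linear, so multiplication by $e_{S^1\times H}(V)$, whose non‑leading terms carry Chern classes from $BP^*(N_H)$, need not preserve it. That is exactly the point the proof must address. However, your proposed fix does not work, and does not match what the paper does.

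Your replacement filtration ``$M\supset I_1 M\supset\dots\supset I_N M=0$'' is not a decreasing filtration: the invariant prime ideals satisfy $I_1\subset I_2\subset\cdots$, so $I_1M\subset I_2M\subset\cdots$. Reading it charitably as some other $I_\bullet$-adic chain (say powers of the maximal homogeneous ideal) does not help either: $BP^*((N_H)_{<2q})$ is not killed by any power of $I_\infty$ (it contains free summands coming from even cells), so there is no finite $M$-linear filtration of the kind you describe, and the assertion that its length is bounded by $\lceil\log_p q\rceil$ confuses the length of the Landweber filtration with the bound on the Landweber \emph{heights} $n_j$. This gap also shows up in your final bookkeeping: the factor $q$ in $qB$ is left unexplained (``absorbing the cross-terms\ldots''), whereas it must come from a specific mechanism.

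The paper avoids the filtration-preservation problem rather than changing the filtration. It writes $e_{S^1\times H}(V)=\Pi-e_+(u)$, where $\Pi=\prod_w\bigl([x_wp^w]\gdot_{BP}u\bigr)$ has coefficients in $BP^*$ only (so it \emph{does} preserve any $BP^*$-linear filtration), and where the Chern-class correction $e_+(u)$ restricts to $0$ on $\left(N_H\right)_{\le1}\times BS^1$ and is therefore nilpotent of order $q$ on $(N_H)_{<2q}$. The telescoping identity
\[
e(V)\cdot\sum_{i=1}^q \Pi^{q-i}e_+(u)^{i-1}=\Pi^q-e_+(u)^q=\Pi^q
\]
shows $\ker\bigl(e(V)\cdot\bigr)\subset\ker\bigl(\Pi^q\cdot\bigr)$. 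Since $\Pi^q$ is $BP^*$-linear, one can now step down the Landweber filtration slice by slice, using \eqref{eq:upowers} to see that on $BP^*/I_{n_j}[[u]]$ the leading $u$-order of $\Pi^q$ is $q\sum_w p^{n_jw}$ with non-zero-divisor leading coefficient; summing over $j$ gives the bound $qB$. The factor $q$ is exactly the nilpotency order of $e_+$. To repair your argument you would need to supply this (or an equivalent) reduction from $e(V)$ to a $BP^*$-linear operator before doing the slice-by-slice count; as written, the slice analysis is applied to an operator that does not act on the slices.
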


\begin{proof}
Refining formula~\eqref{eq:restrictioneuler}, we have 
\begin{equation}\label{eq:eulerline}
e_{S^1\times H}(V) = \prod\nolimits_w\left([x_wp^w]\cdot_{BP} u\right) - e_+(u) = \Pi - e_+(u)
\end{equation}
where $e_+(u)\in BP^*(N_H)[[u]]$ vanishes when restricted to~$\left(N_H\right)_{\le 1}\times BS^1$. 
It follows that restriction to $\left(N_H\right)_{< 2q}\times BS^1$ makes $e_+(u)$ nilpotent, $e_+(u)^q =0$. 

The identity
\[
e(V)\cdot \sum\nolimits_{i=1}^q 
	\Pi^{q-i}\cdot e_+(u)^{i-1} = \Pi^q - e_+(u)^q 
= \Pi^q
\] 
then shows that the kernel of reduced multiplication by $e(V)$ 
\begin{equation}\label{eq:kerev}
\ker e(V)\cdot: BP^*\left((N_H)_{< 2q}\right)[[u]] \to 
	BP^*\left((N_H)_{< 2q}\right)[[u]]/(u^M)
\end{equation}
is contained in that of reduced $\Pi^{q}$-multiplication.
Now, a series in 
\[
\ker \Pi^q\cdot: BP/I_h[[u]] \to BP/I_h[[u]]/(u^M).
\]
may not contain $u$-powers below $M-q\sum_w p^{hw}$, as the leading coefficient in $[p^w]$ in~\eqref
{eq:upowers} is not a zero-divisor. It follows by stepping down in the filtration that series 
in~\eqref{eq:kerev} cannot contain $u$-powers lower than $M-qB$. 
This proves the Proposition. 
\end{proof}

To address a general~$L$, consider the $B\boldsymbol{\mu}_l$-fiber bundle $l$-fold multiplication sequence 
\begin{equation}\label{eq:ellfibering}
B\boldsymbol{\mu}_l \rightarrowtail BS^1 
	\xrightarrowdbl{\ [l]\ } B(S^1/\boldsymbol{\mu}_l).
\end{equation} 
Recall that $B\boldsymbol{\mu}_l$ has a $CW$ 
exhaustion by lens spaces $L_r:=S^{2r+1}/\boldsymbol{\mu}_l$;  the action thereon of the circle 
$S^1/ \boldsymbol{\mu}_l$ gives a fiberwise $(2r+1)$-dimensional approximation of the bundle~\eqref
{eq:ellfibering}, homotopy equivalent to the fibration sequence
\[
L_r \to \mathbb{CP}^r \xrightarrowdbl{\ [l]\ } 
	B(S^1/\boldsymbol{\mu}_l). 
\]
Call $\varphi: \hat{X} \twoheadrightarrow X$ the $L_r$-fiber bundle pulled back by a map 
$f:X\to B(S^1 /\boldsymbol{\mu}_l)$, and let $C=\sum_j p^{sn_j}$, with the Landweber heights $n_j$ of $X$. 

\begin{lem}\label{lem:skbound}
If $\dim X<2q$ and $r\ge q(q-1)C$, the pullback in on $BP$-cohomology is injective: 
\[
\varphi^*: BP^*(X) \to BP^*(\hat{X}).
\]
\end{lem}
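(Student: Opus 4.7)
The plan is to work with the presentation
\[
BP^*(\hat X) \cong BP^*(X)[u]/\bigl(u^{r+1},\ [l]\cdot_{BP} u - f^*\omega\bigr),
\]
analogous to Lemma~\ref{lem: finitecovering}, which comes from the Leray--Hirsch analysis of the pullback square $\hat X = X \times_{B(S^1/\boldsymbol{\mu}_l)} \mathbb{CP}^r$; in this presentation $\varphi^*$ is the inclusion $\alpha \mapsto \alpha \cdot 1$. The strategy is to mimic the telescoping identity underlying Proposition~\ref{prop:effective}, combined with a Landweber-filtration bookkeeping of $u$-powers.

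To show injectivity, I would verify that a nonzero $\alpha \in BP^*(X)$ cannot equal $u^{r+1}A + ([l]\cdot_{BP} u - f^*\omega)\,B$ in $BP^*(X)[u]$. Setting $Y := \sum_{i=1}^{q} ([l]\cdot_{BP} u)^{q-i}(f^*\omega)^{i-1}$, the factorization
\[
([l]\cdot_{BP} u - f^*\omega)\cdot Y = ([l]\cdot_{BP} u)^q - (f^*\omega)^q
\]
together with the vanishing $(f^*\omega)^q = 0$ (since $BP^{2q}(X) = 0$ when $\dim X < 2q$, by Atiyah--Hirzebruch convergence and the fact that $BP^*(pt)$ is concentrated in non-positive degrees) yields the congruence
\[
\alpha \cdot Y \equiv ([l]\cdot_{BP} u)^q\,B \pmod{u^{r+1}}.
\]

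I would then reduce modulo each Landweber height $I_{n_j}$ of $BP^*(X)$ and invoke \S\ref{sec:facts}.1, whereby $[l]\cdot_{BP} u \equiv x\,u^{p^{s n_j}}$ to leading order in $u$, with $x \in BP^*/I_{n_j}$ a unit. Consequently $Y$ has $u^{(q-1)p^{s n_j}}$-coefficient a unit multiple of $1$, while $([l]\cdot_{BP} u)^q\,B$ starts only at $u^{q p^{s n_j}}$; matching the coefficient of $u^{(q-1)p^{s n_j}}$ in the displayed congruence forces $\alpha \equiv 0$ in the Landweber slice, modulo correction terms. The bound $r \ge q(q-1)C$, with $C = \sum_j p^{s n_j}$, guarantees that every such matching equation falls strictly below the truncation $u^{r+1}$, and simultaneously budgets the extra $u$-capacity needed to absorb the subleading corrections to $[l]\cdot_{BP} u$ modulo $I_{n_j}$ coming from the $v_{n_j+1}, v_{n_j+2}, \dots$ contributions. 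Iterating slice-by-slice through the Landweber filtration then forces $\alpha = 0$.

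The main obstacle will be the last step: rigorously controlling the higher-order-in-$u$ corrections to $[l]\cdot_{BP} u$ modulo each $I_{n_j}$ and their interactions with the truncation $u^{r+1} = 0$ and with the iteration across several Landweber slices, so as to show that these combined effects fit within the quadratic bound $q(q-1)C$ and do not pollute the leading matching equation at $u^{(q-1) p^{s n_j}}$.
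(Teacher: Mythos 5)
Your overall strategy — identify the circle bundle $S^1 \rightarrowtail \hat X \twoheadrightarrow X\times\mathbb{CP}^r$ with $BP$-Chern class $\omega \gplus_{BP} [l]\gdot_{BP}u$, pass to the Gysin sequence, and exploit the telescoping identity $(\lambda+\omega)\sum_{i=1}^q\lambda^{q-i}(-\omega)^{i-1}=\lambda^q-(-\omega)^q$ together with $\omega^q=0$ on a space of dimension $<2q$ — is the same starting point the paper uses, and the reduction to the formula $\alpha\cdot Y \equiv ([l]\gdot_{BP}u)^q\,B \pmod{u^{r+1}}$ is correctly derived. You also correctly locate the role of the bound $r\ge q(q-1)C$.

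Where the argument becomes a gap, as you yourself flag, is the iteration. You propose to reduce modulo each Landweber ideal $I_{n_j}$ and match the coefficient of $u^{(q-1)p^{sn_j}}$; but that coefficient of $\alpha Y$ is not just a unit multiple of $\alpha$ — the lower-index terms $\lambda^{q-i}\omega^{i-1}$ with $i\ge 2$ also contribute at that $u$-degree (via the subleading $u$-coefficients of $[l]\gdot_{BP}u$), so the equation you obtain involves $\alpha\omega, \alpha\omega^2,\dots$, none of which are yet known to vanish. Moreover, passing to a Landweber slice is an operation on the $BP^*$-module filtration, and it is not immediate that the ring equation (with its $\omega$- and $u$-multiplications) can be interpreted slice-by-slice. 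The paper avoids both problems by iterating on powers of $\omega$ rather than on Landweber slices: setting $u=0$ in the telescoped congruence gives $\alpha\omega^{q-1}=0$ with no bookkeeping at all; then the remaining quantity $\alpha\sum_{i=2}^q\lambda^{q-i}(-\omega)^{i-2}-\beta\lambda^{q-1}$ lies in the kernel of $\lambda$-multiplication mod $u^{r+1}$, and the effective kernel estimate from the proof of Proposition~\ref{prop:effective} pushes it to zero mod $u^{r+1-qC}$, after which setting $u=0$ again yields $\alpha\omega^{q-2}=0$. Repeating this $q-1$ times consumes $q(q-1)C$ of the $u$-budget and forces $\alpha=0$. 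So the paper's iteration is on $\omega$-powers, with each pass invoking Proposition~\ref{prop:effective} explicitly; this is the missing mechanism that makes your sketch close. If you want to complete your version, you should replace the single coefficient-matching at $u^{(q-1)p^{sn_j}}$ with exactly this descending induction on $j$ in $\alpha\omega^j=0$.
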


\begin{proof}
The space $\hat{X}$ can be presented as a circle bundle 
\[
S^1  \rightarrowtail  \hat{X} \twoheadrightarrow 
X\times \mathbb{CP}^r
\]
with $BP$-Chern class $\omega \gplus_{BP} [l]\gdot_{BP}u $ over the base ($\omega$ is pulled back by $f$). Split this as 
$\lambda + \omega $, with $\omega^q=0$ and~$\lambda$ divisible by $u$. The Gysin sequence gives  
\begin{equation}
\dots \to BP^*(X)[[u]]/u^{r+1}	\xrightarrow
	{\ \lambda +\omega\ }
	BP^*(X)[[u]]/u^{r+1} \to  BP^*(\hat{X})\to \dots
\end{equation}
The kernel of $\varphi^*$ consists of classes $\alpha\in BP^*(X)$ expressible as 
\[
\alpha= \beta(u)\cdot(\lambda +\omega)\pmod{u^{r+1}}, 
\]
with some $\beta(u)\in BP^*(X)[[u]]$. We get from the geometric sum formula
\[
\alpha\cdot \sum\nolimits_{i=1}^q 	\lambda^{q-i}\cdot 
	(-\omega)^{i-1} = \beta(u)\cdot\lambda^q  \pmod{u^{r+1}}.
\]
Setting $u=0$ shows that $\alpha\cdot \omega^{q-1}=0$. 
It follows that
\[
\alpha\cdot \sum\nolimits_{i=2}^q 	\lambda^{q-i}\cdot 
	(-\omega)^{i-2} 
	- \beta(u)\cdot\lambda^{q-1}
\]
is in the kernel of $\lambda$ on $BP^*(X)[[u]]/(u^{r+1})$.  The proof of Proposition~\ref{prop:effective} 
then ensures its vanishing mod~$u^{r+1-qC}$. Reducing mod~$u$ again shows the vanishing of 
$\alpha\cdot \omega^{q-2}$; continuing, we conclude that 
$\alpha=0 \pmod{u^{r+1-q(q-1)C}}$, proving that $\alpha=0$.  
\end{proof}

\begin{proof}[Proof of Proposition~\ref{prop:kerstability}]
For any $r$, the kernel of the restricted 
multiplication
\begin{equation} \label{eq:kernelr}
\ker e_L(V)\cdot: BP^*\left(N_L\right) \to 
	BP^{*+\dim V}\left((N_L)_{\le r}\right)
\end{equation}
lifts under $\varphi^*$ to
\begin{equation} \label{eq:kernelrup}
\ker e_{S^1\times H}(V)\cdot: BP^*\left(N_{S^1\times H}\right) \to 
BP^ {*+\dim V}\left((N_{S^1\times H})_{\le r}\right). 
\end{equation}
For any $q'$,  Proposition~\ref {prop:effective} 
finds an $r$ so that \eqref {eq:kernelrup} vanishes on $(N_{S^1\times H})_{\le q'}$. 
Given $q$, Lemma~\ref {lem:skbound} then assures us that the kernel in~\eqref {eq:kernelr}  restricts to zero 
over~$(N_L)_{\le q}$, for suitably large $r$ and $q'$. 
\end{proof}

\section{Noetherian property of $K_{p^r}(n)^*(N_G)$} \label{section:Noetherian}
We prove the following fact, of interest in relation to the proof of 
Lemma~\ref{lem:AtiyahBott1}.

\begin{lem} \label{lem:Noetherian}  For a finite $G$-CW complex $N$,  
the Morava K-theory $K_{p^r}(n)^*(N_G)$ is a Noetherian ring.   \end{lem}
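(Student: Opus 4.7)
The plan is to exhibit $K_{p^r}(n)^*(N_G)$ as a module-finite algebra over a Noetherian commutative base ring, namely $R := K_{p^r}(n)^*(BG)$. (For $p=2$, commutativity is understood via the central subring generated by the image of $MU^*$, which contains all Chern classes; see Remark~\ref{rem:ncKtheory}.) Once we know (a) that $R$ is Noetherian and (b) that $K_{p^r}(n)^*(N_G)$ is finitely generated as an $R$-module, the conclusion follows: any ideal of $K_{p^r}(n)^*(N_G)$ is an $R$-submodule of a finitely generated module over a Noetherian ring, hence finitely generated as an $R$-module and \emph{a fortiori} as an ideal.

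For (a), treat a maximal torus $T \subseteq G$ first: the Atiyah-Hirzebruch spectral sequence for $BT$ collapses (concentrated in even degrees), giving $K_{p^r}(n)^*(BT) \cong K_{p^r}(n)^*[[u_1, \ldots, u_{\mathrm{rk}\,T}]]$, a formal power series ring over the graded Noetherian coefficient ring $(\mathbb{Z}/p^r)[v_n^{\pm 1}]$, hence Noetherian. Pass to general $G$ via the fibration $G/T \to BT \to BG$: the fiber $G/T$ is a finite CW complex with finitely generated $K_{p^r}(n)^*$-cohomology, and by the Atiyah-Hirzebruch convergence of \S\ref{sec:facts}.3 applied skeletally, $K_{p^r}(n)^*(BT)$ is finitely generated as a module over $K_{p^r}(n)^*(BG)$. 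The Eakin-Nagata theorem then yields Noetherianity of $R$. (For disconnected $G$, first apply this to the identity component $G_0$ and then use finiteness of $BG_0 \to BG$.)

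For (b), invoke the finite equivariant CW structure on $N$, giving a filtration $\varnothing = N^{(-1)} \subset N^{(0)} \subset \cdots \subset N^{(k)} = N$ with successive cofibers of the form $(G/H_\alpha)_+ \wedge S^{d_\alpha}$ for finitely many closed stabilizer subgroups $H_\alpha \subseteq G$. Taking Borel constructions converts these to suspensions of $(BH_\alpha)_+$, and the long exact sequences in $K_{p^r}(n)^*$ show inductively that $K_{p^r}(n)^*(N_G)$ is built from finitely many pieces of the form $K_{p^r}(n)^*(BH_\alpha)$. Each such piece is finitely generated over $R$ by the argument of (a) applied to the fibration $G/H_\alpha \to BH_\alpha \to BG$, so $K_{p^r}(n)^*(N_G)$ is itself a finitely generated $R$-module.

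The main obstacle is the module-finiteness of $K_{p^r}(n)^*(BH)$ over $K_{p^r}(n)^*(BG)$ for a closed subgroup $H \subseteq G$. This requires convergence control for the Atiyah-Hirzebruch spectral sequence of the fibration $G/H \to BH \to BG$ over an infinite-dimensional base. Fortunately, the strong convergence recorded in \S\ref{sec:facts}.3, together with the finiteness of $K_{p^r}(n)^*(G/H)$, yield a uniformly finite set of generators after a $\lim{}^1$-vanishing argument; the remaining assembly is formal.
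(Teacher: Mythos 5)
Your proposal uses a genuinely different decomposition from the paper's, and the way you carry it out leaves the crucial step unproved.

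The paper follows Venkov's original argument verbatim: embed $G\hookrightarrow U(q)$, observe that the fiber of $N_G\to BU(q)$ is the \emph{finite} CW complex $W=N\times_G U(q)$, and run the Leray spectral sequence over $BU(q)$. The point of $BU(q)$ is that $H^*(BU(q);K_{p^r}(n)^*)$ is a polynomial ring and the Atiyah--Hirzebruch spectral sequence for $BU(q)$ collapses at $E_2$, so the associated graded $\mathrm{gr}\,K_{p^r}(n)^*(BU(q))$ is literally $K_{p^r}(n)_*[c_1,\dots,c_q]$, a visibly Noetherian ring over which the $E_2$ page is finitely generated; the $E_\infty$ page then inherits finite generation, and the complete, Hausdorff, exhaustive filtration lets one lift to $K_{p^r}(n)^*(N_G)$ as a finitely generated module over $K_{p^r}(n)^*(BU(q))\cong K_{p^r}(n)_*[[\hat c_1,\dots,\hat c_q]]$. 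Your plan instead works over the base $R=K_{p^r}(n)^*(BG)$, proves $R$ Noetherian via $BT$ and Eakin--Nagata, and decomposes $N$ by its equivariant cells to reduce to the $K_{p^r}(n)^*(BH_\alpha)$.

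The gap is in your step (b), and it is not a technicality. The $H_\alpha$ appearing in a finite $G$-CW structure are \emph{arbitrary} closed subgroups of $G$, typically disconnected or finite, so $K_{p^r}(n)^*(G/H_\alpha)$ is neither even nor free over the coefficients, the local system over $BG$ need not be trivial, and there are no tautological classes in $K_{p^r}(n)^*(BH_\alpha)$ restricting to a basis of the fiber. Thus ``the argument of (a)'' (which for $G/T\to BT\to BG$ amounts to Leray--Hirsch over the even cell structure of $G/T$) simply does not apply. The statement you need --- that $K_{p^r}(n)^*(BH)$ is module-finite over $K_{p^r}(n)^*(BG)$ for \emph{every} closed $H\subseteq G$ --- is precisely the content of the Venkov--Quillen theorem being proved, and the closing sentence ``a uniformly finite set of generators after a $\lim^1$-vanishing argument; the remaining assembly is formal'' does not supply an argument for it. A second, related soft spot: in both (a) and (b) you run a Leray spectral sequence over $BG$; its $E_2$ page is finitely generated over $H^*(BG;K_{p^r}(n)_*)$, but the $E_\infty$ page is a module over $\mathrm{gr}\,K_{p^r}(n)^*(BG)$, i.e.\ the $E_\infty$ page of the Atiyah--Hirzebruch spectral sequence for $BG$, which is an a priori mysterious subquotient ring (the AHSS for $BG$ is not known to collapse). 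Passing from ``$E_2$ finitely generated over $E_2$'' to ``$E_\infty$ finitely generated over $E_\infty$'' needs an argument you do not give. The paper dodges exactly this by choosing $BU(q)$, where the AHSS degenerates and $\mathrm{gr}\,K_{p^r}(n)^*(BU(q))$ is explicit and polynomial. To make your route rigorous you would in effect have to reintroduce the embedding into $U(q)$ and run the paper's argument for each $BH_\alpha$ --- at which point the equivariant cell decomposition buys nothing, since the single fibration $W\to N_G\to BU(q)$ already captures all of $N_G$ at once.
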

\begin{proof} 
We apply an argument going back to Venkov and Quillen (\cite{Venkov} and \cite[\S 2]{Quillen}).  
Choose a unitary embedding $G \hookrightarrow U(q).$  Composing with the induced map 
$ BG \to BU(q)$ gives rise to a fibration: \begin{align} W \to N_G \to BU(q), \end{align} where 
$W$ has the homotopy type of a finite CW complex.  Consider the Atiyah-Hirzebruch-Leray spectral 
sequence: 
\begin{align} 
E_2^{s,t}= H^s(BU(q), K_{p^r}(n)^t(W)) \Rightarrow K_{p^r}(n)^{s+t}(N_G). 
\end{align} 
Then, $H^*(BU(q),\mathbb{Z}/p^r\mathbb{Z})$ 
is a polynomial algebra in the Chern classes $c_1,\cdots, c_q$. Invoking the finiteness of 
$K_{p^r}(n)^*(W)$ over $K_{p^r}(n)_{*}=\mathbb{Z}/p^r\mathbb{Z}[v_n,v_n^{-1}]$,  we have that the 
$E_2$ page is a finitely generated module over the Noetherian ring $R:=K_{p^r}(n)_{*}[c_1,\cdots,c_q]$.  
It follows from this that the $E_\infty$ page is a finitely generated module over~$R$.  
To conclude,  we note that $K_{p^r}(n)^*(BU(q)) \cong K_{p^r}(n)_{*}[[\hat{c}_1,\cdots,\hat{c}_q]]$,   
where $\hat{c}_1,\cdots,\hat{c}_q$ are the Morava Chern classes.  By construction,  the action 
of $R$ on the $E_\infty$ page is the associated graded of the action of  $K_{p^r}(n)^*(BU(q))$ on 
$K_{p^r}(n)^*(N_G)$.  As the filtration on  $K_{p^r}(n)^*(N_G)$ is exhaustive, Hausdorff, and complete,  
it follows that $K_{p^r}(n)^*(N_G)$ is a finitely generated module over $K_{p^r}(n)^*(BU(q))$
and hence Noetherian.
 \end{proof}

\smallskip
\noindent
{\sc Daniel Pomerleano}, \texttt{daniel.pomerleano@umb.edu} 
\\
{\sc Constantin Teleman}, \texttt{teleman@berkeley.edu}

\end{document}